\documentclass[a4paper,12pt]{article}
\usepackage{amsmath}
\usepackage{amssymb}
\usepackage{amsthm}
\usepackage{amsfonts}
\usepackage{mathrsfs}
\usepackage{graphicx}
\usepackage{hyperref}
\usepackage{float}
\usepackage{slashed}
\usepackage{cite}
\usepackage{xcolor}
\usepackage{authblk}


\newtheorem{theorem}{Theorem}
\newtheorem{lemma}{Lemma}

\newtheorem{definition}{Definition}

\newtheorem{preposition}{Preposition}

\textwidth  = 16truecm \textheight = 24truecm

\hoffset = -1truecm \voffset = -2truecm
\numberwithin{equation}{section}

\newtheoremstyle{named}{}{}{\itshape}{}{\bfseries}{.}{.5em}{\thmnote{#3 }#1}
\theoremstyle{named}

\hypersetup{ pdftitle={Report 2},
	pdfauthor={Mulyanto}, 
	pdfkeywords={black hole; decay estimate; Maxwell-Higgs; Morawetz estimate; RN}, 
	bookmarksnumbered, pdfstartview={FitH}, urlcolor=blue, }



\begin{document}
	\pagestyle{plain}

\title{\LARGE\textbf{Uniform Energy Bound for Maxwell-Higgs Equations on Reissner-Nordström Spacetimes}}

\author[1]{Mulyanto}
\author[1]{Ardian N. Atmaja}
\author[2]{Fiki T. Akbar}
\author[2]{Bobby E. Gunara}

\affil[1]{\textit{\small Research Center for Quantum Physics, National Research and Innovation Agency (BRIN),
Kompleks PUSPIPTEK Serpong, Tangerang 15310, Indonesia}}
\affil[2]{\textit{\small Theoretical Physics Laboratory,
	Theoretical High Energy Physics Research Division,
	Faculty of Mathematics and Natural Sciences,
	Institut Teknologi Bandung
	Jl. Ganesha no. 10 Bandung, Indonesia, 40132}}

\date{\small email: muly031@brin.go.id, ardian\textunderscore n\textunderscore a@yahoo.com, ftakbar@itb.ac.id, bobby@itb.ac.id}

\maketitle

\begin{abstract}
In this paper, we prove the uniform energy bound for the Maxwell-Higgs system in the exterior region of Reissner-Nordstr\"om black holes. By employing an integrated local energy decay (ILED) estimate in combination with the Sobolev embedding theorem on a compact Riemannian manifold, we derived $L^\infty$ bounds for the fields. These results were then used to obtain a bound for the conformal energy of the system using the Cauchy-Schwarz inequality and Hardy-type inequalities.
\end{abstract}

\section{Introduction}
It is of interest to consider the energy bound of the following Maxwell-Higgs (MH) system on  four dimensional Reissner-Nordstr\"om (RN) spacetimes. We begin from the Lagrangian  of MH system which can be written down as	
\begin{equation}\label{lagrangian}
\mathcal{L}=-\frac{1}{4}{{F}_{\mu \nu }}{{F}^{\mu \nu }}-{{D}_{\mu }}\phi \overline{{{D}^{\mu }}\phi }~,
\end{equation}
where $F_{\mu \nu } \equiv {{\nabla }_{\mu }}A_{\nu }-{{\nabla }_{\nu }}A_{\mu }$ is the gauge field strength and $D_{\mu}{\phi}\equiv\nabla _{\mu }{\phi }-iA_{\mu}{\phi }$ is the covariant derivative of the complex scalar fields. Lagrangian \eqref{lagrangian} describes the interaction of a two-form function $F_{\mu\nu}$ and a complex scalar field $\phi$ on 4-dimensional Reissner-Nordstr\"om  spacetimes with the standard coordinate $x^{\mu} = (t,x^i)$ where $\mu=0,1,2,3$ and $i=1,2,3$, endowed with metric,
\begin{eqnarray}\label{RNmetric}
g_{\mathcal{M}}=f(r)dt^2-\frac{1}{f(r)}dr^2-r^2d\omega^2,
\end{eqnarray}
where $d\omega^2$ is Euclidean metric on 2-sphere which in spherical coordinates is
\begin{eqnarray}
d\omega^2=d\theta^2+ \text{sin}^2 \theta d\varphi^2,
\end{eqnarray}
and
\begin{eqnarray}
f(r)=1-\frac{2M}{r}+\frac{Q^2}{r^2}.
\end{eqnarray}
The field equations of motion and the energy-momentum tensor for the MH system with respect to the Lagrangian \eqref{lagrangian} are 
\begin{eqnarray}\label{eom1}
{{\nabla }^{\alpha }}{{F}_{\alpha \gamma }}=-i\left( {{D}_{\gamma }}\phi \overline{\phi }-\phi \overline{{{D}_{\gamma }}\phi } \right)~,
\end{eqnarray}
\begin{eqnarray}\label{eom2}
{{D}^{\mu }}{{D}_{\mu }}\phi =0~,
\end{eqnarray}
\begin{equation}\label{TEM}
{{{T}}^{\mu \nu }}=F_{\gamma }^{\nu }{{F}^{\mu \gamma }}-\frac{1}{4}{{g}^{\mu \nu }}{{F}^{\alpha \beta }}{{F}_{\alpha \beta }}+{{D}^{\mu }}\phi \overline{{{D}^{\nu }}\phi }-\frac{1}{2}~{{g}^{\mu \nu }}~{{D}_{\gamma }}\phi ~\overline{{{D}^{\gamma }}\phi }.
\end{equation}
where 
\begin{eqnarray}
\nabla^{\alpha}F_{\alpha\gamma}=g^{\alpha\beta}\left(\partial_{\beta}F_{\alpha\gamma}-F_{\sigma\gamma}\Gamma^{\sigma}_{\beta\alpha}-F_{\alpha\sigma}\Gamma^{\sigma}_{\beta\gamma}\right)~.
\end{eqnarray}

The study of energy decay originated from the seminal work of C. S. Morawetz, who discovered how to use vector fields to derive local decay rates for wave equations on flat spacetimes \cite{morawetz1961, morawetz}. S. Klainerman then used the vector field method to prove the global existence of nonlinear wave equations in Minkowski spacetimes \cite{klainerman1980}. Subsequently, the groundbreaking concept of a generalized energy norm derived from a conformal group of vector fields was introduced by the same author. This provides a new framework for analyzing the decay rates of the wave equation in $\mathbb{R}^{n+1}$ \cite{klainerman1}. Klainerman’s approach essentially merged the local energy decay estimates established by C. S. Morawetz \cite{morawetz1962} with the conformal method developed by  Y. Choquet-Bruhat and D. Christodoulou \cite{cho}. The techniques were subsequently applied to yield numerous results on the long-term behavior of solutions and the global existence of fields on the Minkowski spacetimes  \cite{klainerman1987, gini, klainerman1994}.

The vector field method was later extended for application to curved spacetime as well. In general, even in compact manifolds, the energy of the solutions in a curved spacetime is not always bounded \cite{bruhat}. The first pioneering work on curved spacetimes was conducted by S. Klainerman \cite{klainerman1981}, who obtained the exact solution of the homogeneous  wave equation on Friedmann-Robertson-Walker spacetimes. Later, the vector field method was widely used to determine the decay estimate for various curved spacetimes \cite{blue1, blue, mokdad, Finster, Ander}. For the scalar field in the curved spacetimes case, the Killing energy degenerates in the horizon. Because $\partial_t$ is actually null at the event horizon of spacetimes, the energy associated with $\partial_t$  degenerates in derivatives transverse to the horizon, and it is not positive for the entire domain of exterior communication. These problems were resolved by M. Dafermos and I. Rodnianski by exploiting the redshift effect on Schwarzschild black holes \cite{dafermos}. For the Maxwell field cases, the conformal energy of the full Maxwell field is controlled by the energy of the middle component. To obtain the decay estimate, an estimate of the middle-component field is crucial. In \cite{ghanem}, this problem was somewhat resolved by assuming that the energy of the middle component is bounded by its reduced system energy. Later, M. Mokdad proved the energy estimate of middle component problems for Maxwell fields using tetrad formalism and integrated local decay energy estimation methods on Reissner Nordstr\"om de Sitter spacetimes \cite{mokdad}. However, relatively few studies have focused on the Maxwell-Higgs equation, especially in curved spacetimes, such as those of RN black holes. The challenge in determining the energy decay of the Maxwell-Higgs equation is to estimate the interaction term due to the two fields. The shape of the resulting wave-like equation became nonlinear. Thus, we can not employ directly the method on \cite{dafermos} or \cite{mokdad}. 

In this paper, we focus on the energy boundedness for the Maxwell-Higgs system on RN spacetimes. First, we cast the Maxwell-Higgs equations \eqref{eom1} into tetrad formalism, enabling them to take a nonlinear wave-like equation. Then, we define the system energy and conformal energy derived from the Morawetz vector field. To establish a bound for conformal energy, we must first determine the $L^\infty$ bound in the local region for the scalar and electromagnetic fields. This was achieved by applying the Sobolev embedding theorem to a compact Riemannian manifold. Furthermore, using this result, the bound for the source term (the interaction term between the scalar field and the electromagnetic field) is obtained. Finally, using the Cauchy-Schwarz inequality together with the Hardy-like inequality, we obtain the uniform bound for the conformal energy for a finite time.

This paper is organized as follows: In Section \ref{sec:MH}, we review the Maxwell-Higgs system in tetrad formalism to obtain the wavelike equation; in Section \ref{sec:estimate} we obtain the bound on $L^{\infty}_{loc}$ of the system; and in Section \ref{sec:main}, we prove the main results of this work, which is the uniform bound  for the Maxwell-Higgs equation on RN spacetimes.

\section{The Wave-like Equation}
\label{sec:MH}
In this section, we rewrite the Maxwell-Higgs equation in the tetrad formalism. As mentioned in the introduction, to control the energies of the Maxwell ﬁeld, we must obtain a bound on the middle component that satisfies a wave-like equation. We demonstrate that this can be expressed in terms of a wave-like equation. To express Maxwell-Higgs equations in a way that depends more on the geometry of the spacetime, we use a tetrad formalism as in \cite{mokdad}. 
\begin{definition}\label{def1}
Let the following tetrad be defined as:
\begin{eqnarray}
\label{L} L&\equiv&\partial_{t}+\partial_{r*}\\
\label{N} N&\equiv&\partial_{t}-\partial_{r*}\\
\label{M} M&\equiv&\partial_\theta+\frac{i}{\text{sin}\theta}\partial_{\varphi}\\
\label{barM} \bar{M}&\equiv&\partial_\theta-\frac{i}{\text{sin}\theta}\partial_{\varphi}
\end{eqnarray}
\end{definition}
The Maxwell field component could be represented by three complex scalar functions, known as the spin components of the Maxwell field, which are defined via the use of this tetrad.
\begin{eqnarray}
\Phi_0&=&\frac{1}{2}\left(V^{-1}F(L,M)+F(\bar{M},M)\right)~,\\
\Phi_1&=&F(L,M)~,\\
\Phi_{-1}&=&F(N,\bar{M})~.
\end{eqnarray}
where $V=\frac{f}{r^2}$.
Moreover, in this framework, the Maxwell-Higgs equations can also be translate as follows 
\begin{lemma}\label{maxwellcompact}
	$F,\phi$ satisﬁes Maxwell-Higgs equations \eqref{eom1} and \eqref{eom2} if and only if its spin components $(\Phi_{-1},\Phi_{0}, \Phi_{1})$ in the stationary tetrad satisfy the compacted equations
	\begin{eqnarray}
	\label{maxcom1}N\Phi_1&=&VM\Phi_{0}+if\left( {{D}_{2}}\phi \overline{\phi }-\phi \overline{{{D}_{2}}\phi } \right)+i\frac{f}{\text{sin}\theta}\left( {{D}_{3}}\phi \overline{\phi }-\phi \overline{{{D}_{3}}\phi } \right)~,\\
	\label{maxcom2}L\Phi_{0}&=&\bar{M}\Phi_{1}+ir^2\left( {{D}_{0}}\phi \overline{\phi }-\phi \overline{{{D}_{0}}\phi } \right)+ir^2\left( {{D}_{1}}\phi \overline{\phi }-\phi \overline{{{D}_{1}}\phi } \right)~,\\
	\label{maxcom3}N\Phi_{0}&=&-M_{1}\Phi_{-1}+ir^2\left( {{D}_{0}}\phi \overline{\phi }-\phi \overline{{{D}_{0}}\phi } \right)+ir^2\left( {{D}_{1}}\phi \overline{\phi }-\phi \overline{{{D}_{1}}\phi } \right)~,\\
	\label{maxcom4}L\Phi_{-1}&=&V\bar{M}\Phi_{0}+if\left( {{D}_{2}}\phi \overline{\phi }-\phi \overline{{{D}_{2}}\phi } \right)+i\frac{f}{\text{sin}\theta}\left( {{D}_{3}}\phi \overline{\phi }-\phi \overline{{{D}_{3}}\phi } \right)~,
	\end{eqnarray}
 where $M_1=M+cot\theta$ and $\bar{M}$ is its conjugate.
\end{lemma}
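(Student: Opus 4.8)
The plan is to prove the equivalence by a direct computation in the spin-coefficient formalism attached to the null tetrad of Definition \ref{def1}. Once $F$ is decomposed into the three spin scalars $(\Phi_{-1},\Phi_0,\Phi_1)$, the claim is that projecting the field equation \eqref{eom1} onto the four tetrad directions --- supplemented by the Bianchi identity $\nabla_{[\alpha}F_{\beta\gamma]}=0$, which holds automatically since $F=dA$ --- reproduces exactly \eqref{maxcom1}--\eqref{maxcom4}, with every scalar-field contribution entering only through the current $J_\gamma=-i(D_\gamma\phi\,\overline{\phi}-\phi\,\overline{D_\gamma\phi})$ projected onto the same tetrad.

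First I would assemble the geometric data. Introducing the tortoise coordinate $r_*$ with $\partial_{r_*}=f\,\partial_r$, the vectors $L,N$ are null and span the $(t,r_*)$-plane while $M,\bar M$ span the sphere of radius $r$; I would compute once and for all the nonvanishing connection coefficients of this frame, which split into a radial block governed by $f'(r)$ and an angular block governed by the Levi-Civita connection of the round $S^2$. The angular block is the origin of the shifted operator $M_1=M+\cot\theta$: acting on a quantity of nonzero spin weight, the angular covariant derivative differs from the bare $M$ precisely by the $\cot\theta$ term carried by the connection on the sphere. The same data fix the weights $V=f/r^2$, $f$, $f/\sin\theta$ and $r^2$ that appear in front of the various terms.

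With the frame in hand, I would expand $F$ in the tetrad bivector basis so that its six real components are packaged into $\Phi_0,\Phi_1,\Phi_{-1}$ as in the definitions preceding the lemma, and then project \eqref{eom1} and the Bianchi identity onto $L,N,M,\bar M$. Substituting the connection coefficients turns the left-hand sides into the transport derivatives $N\Phi_1$, $L\Phi_0$, $N\Phi_0$, $L\Phi_{-1}$ together with the angular operators $M\Phi_0$, $\bar M\Phi_1$, $M_1\Phi_{-1}$, $\bar M\Phi_0$, while the projected current supplies the sources: the angular projection produces the two terms weighted by $f$ and $f/\sin\theta$ in \eqref{maxcom1} and \eqref{maxcom4}, and the time-radial projection produces the $r^2$-weighted terms of \eqref{maxcom2} and \eqref{maxcom3}. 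Matching term by term yields the four compacted equations. For the converse I would reverse the algebra: the four equations reconstruct the tetrad projections of $\nabla^\alpha F_{\alpha\gamma}$ and hence recover \eqref{eom1}.

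The step I expect to be the main obstacle is the angular bookkeeping. Because $M$ and $\bar M$ neither commute nor act trivially on spin-weighted objects, the curvature of $S^2$ must be tracked with care to produce exactly the operator $M_1=M+\cot\theta$ and the precise numerical weights in front of every term; a single misidentified connection coefficient would break the matching. A secondary point requiring care is the precise role of the scalar equation \eqref{eom2}: since \eqref{maxcom1}--\eqref{maxcom4} manifestly repackage only the Maxwell equation \eqref{eom1}, I would make explicit how \eqref{eom2} enters the equivalence --- whether it is retained unchanged alongside the compacted system or invoked through the current-conservation identity $\nabla^\gamma J_\gamma=0$ forced by the antisymmetry of $F$ --- so that the stated iff rests on a firm footing rather than an over- or under-determined reformulation of the original system.
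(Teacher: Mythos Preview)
Your plan is correct and leads to the lemma, but it packages the computation more abstractly than the paper does. The paper simply writes out $\Phi_{-1},\Phi_0,\Phi_1$ explicitly as linear combinations of the coordinate components $F_{ab}$ in $(t,r_*,\theta,\varphi)$, applies $N$ and $M$ as coordinate vector fields to those expressions, and then compares the two sides term by term using the coordinate Bianchi identities $\partial_2 F_{10}=\partial_0 F_{12}+\partial_1 F_{20}$, $\partial_3 F_{10}=\partial_0 F_{13}+\partial_1 F_{30}$ together with the Maxwell equation \eqref{eom1}; no connection coefficients or spin-weight bookkeeping are ever invoked, and the $\cot\theta$ in $M_1$ falls out of differentiating the explicit $1/\sin\theta$ factors. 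Your spin-coefficient route is more systematic and explains \emph{why} $M_1$ and the weights $V,f,r^2$ appear, at the cost of first computing the frame connection; the paper's route is quicker but more ad hoc. Your closing remark about \eqref{eom2} is well taken: the compacted system visibly repackages only \eqref{eom1} plus Bianchi, so the ``if and only if'' in the lemma should be read with \eqref{eom2} carried alongside unchanged --- the paper's proof does not address this point either.
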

\begin{proof}
	Let express $(\Phi_{-1},\Phi_{0}, \Phi_{1})$ in terms of the component of $F$ in the coordinates $(t,r*,\theta,\varphi)$. We have
	\begin{eqnarray}
	F(L,M)&=&F_{02}+\frac{i}{\text{sin}\theta}F_{03}+F_{12}++\frac{i}{\text{sin}\theta}F_{13}~,\\
	F(L,N)&=&2F_{10}~,\\
	F(\bar{M},M)&=&\frac{2i}{\text{sin}\theta}F_{23}~,\\
	F(N,\bar{M})&=&F_{02}+\frac{i}{\text{sin}\theta}F_{30}+F_{21}++\frac{i}{\text{sin}\theta}F_{13}~.
	\end{eqnarray}
From definition \ref{def1}, we obtain
	\begin{eqnarray}
	\Phi_{-1}&=&F_{02}+\frac{i}{\text{sin}\theta}F_{30}+F_{21}++\frac{i}{\text{sin}\theta}F_{13}~,\\
	\Phi_{0}&=&\frac{r^2}{1-2m/r}F_{10}+\frac{i}{\text{sin}\theta}F_{23}~,\\
	\Phi_{1}&=&F_{02}+\frac{i}{\text{sin}\theta}F_{03}+F_{12}++\frac{i}{\text{sin}\theta}F_{13}~.
	\end{eqnarray}
Thus, we arrive at
	\begin{eqnarray}\label{com1}
	N\Phi_{1}&=&\left(\partial_{t}-\partial_{r*}\right)\left(F_{02}+\frac{i}{\text{sin}\theta}F_{03}+F_{12}++\frac{i}{\text{sin}\theta}F_{13}\right)\notag\\
	&=&\partial_0 F_{02}+\frac{i}{\text{sin}\theta}\partial_0 F_{03}+\partial_0 F_{12}+\frac{i}{\text{sin}\theta}\partial_0 F_{13}\notag\\
	&&+\partial_1 F_{20}+\partial_1 F_{21}+\frac{i}{\text{sin}\theta}\partial_1 F_{30}+\frac{i}{\text{sin}\theta}\partial_1 F_{31}~,
	\end{eqnarray}
	\begin{eqnarray}\label{com2}
	VM\Phi_{0}&=&V\left(\partial_\theta+\frac{i}{\text{sin}\theta}\partial_\varphi\right)\left(V^{-1}F_{10}+\frac{i}{\text{sin}\theta}F_{23}\right)\notag\\
	&=&\partial_2 F_{10}+\frac{i}{\text{sin}\theta}\partial_3 F_{10}+\frac{iV}{ \text{sin}\theta}F_{32}cot\theta\notag\\
	&&+\frac{iV}{\text{sin}\theta}\partial_2 F_{23}+\frac{V}{ \text{sin}^2\theta}\partial_3 F_{32}~.
	\end{eqnarray}
From Bianchi identity, we have
	\begin{eqnarray}
	\partial_2 F_{10}&=&\partial_0 F_{12}+\partial_1 F_{20}~,\\
	\partial_3 F_{10}&=&\partial_0 F_{13}+\partial_1 F_{30}~.
	\end{eqnarray}
Using these identities and the Maxwell-Higgs equations to compare the terms of \eqref{com1} and \eqref{com2}, we see that \eqref{maxcom1} holds. Equations \eqref{maxcom2}-\eqref{maxcom4} can also be proved in the same procedure. 
\end{proof}
The compact equations on lemma \ref{maxwellcompact} can be cast into an evolution system and constraint equation as follows:
	\begin{eqnarray}
\label{maxcons1}\partial_t\Phi_1&=&\partial_{r*}\Phi_1+VM\Phi_{0}+if\left( {{D}_{2}}\phi \overline{\phi }-\phi \overline{{{D}_{2}}\phi } \right)+i\frac{f}{\text{sin}\theta}\left( {{D}_{3}}\phi \overline{\phi }-\phi \overline{{{D}_{3}}\phi } \right)~,\\
\label{maxcons2}\partial_t\Phi_0&=&-\partial_{r*}\Phi_0+\bar{M}\Phi_{1}+ir^2\left( {{D}_{0}}\phi \overline{\phi }-\phi \overline{{{D}_{0}}\phi } \right)+ir^2\left( {{D}_{1}}\phi \overline{\phi }-\phi \overline{{{D}_{1}}\phi } \right)~,\\
\label{maxcons3}\partial_t\Phi_{0}&=&\partial_{r*}\Phi_0-M_{1}\Phi_{-1}+ir^2\left( {{D}_{0}}\phi \overline{\phi }-\phi \overline{{{D}_{0}}\phi } \right)+ir^2\left( {{D}_{1}}\phi \overline{\phi }-\phi \overline{{{D}_{1}}\phi } \right)~,\\
\label{maxcons4}\partial_t\Phi_{-1}&=&-\partial_{r*}\Phi_{-1}+V\bar{M}\Phi_{0}+if\left( {{D}_{2}}\phi \overline{\phi }-\phi \overline{{{D}_{2}}\phi } \right)+i\frac{f}{\text{sin}\theta}\left( {{D}_{3}}\phi \overline{\phi }-\phi \overline{{{D}_{3}}\phi } \right)~.\notag\\
\end{eqnarray}
By adding equations \eqref{maxcons2} and \eqref{maxcons3}, we obtain the following result
\begin{eqnarray}
\partial_t\Phi_{0}=\frac{1}{2}\left(\bar{M}\Phi_{1}-M_{1}\Phi_{-1}\right)+ir^2\left( {{D}_{0}}\phi \overline{\phi }-\phi \overline{{{D}_{0}}\phi } \right)+ir^2\left( {{D}_{1}}\phi \overline{\phi }-\phi \overline{{{D}_{1}}\phi } \right)~.
\end{eqnarray}
Next, by subtracting \eqref{maxcons2} and \eqref{maxcons3}, we derive the constraint equation
\begin{eqnarray}
\partial_{r*}\Phi_0=\frac{1}{2}\left(\bar{M}\Phi_{1}+M_{1}\Phi_{-1}\right)~.
\end{eqnarray}
From Maxwell-Higgs compact equation, we can see that $\Phi_0$ satisﬁes the following wave equation:
\begin{eqnarray}\label{mideq}
\partial^2_{t}\Phi_0&=&\partial^2_{r*}\Phi_0-VM_1\bar{M}\Phi_{0}-M_1\left(if\left( {{D}_{2}}\phi \overline{\phi }-\phi \overline{{{D}_{2}}\phi } \right)\right)\notag\\
&& -M_1 \left(i\frac{f}{\text{sin}\theta}\left( {{D}_{3}}\phi \overline{\phi }-\phi \overline{{{D}_{3}}\phi } \right)\right)\notag\\
&&+ L\left(ir^2\left( {{D}_{0}}\phi \overline{\phi }-\phi \overline{{{D}_{0}}\phi } \right)+ir^2\left( {{D}_{1}}\phi \overline{\phi }-\phi \overline{{{D}_{1}}\phi } \right)\right)~.
\end{eqnarray}
To express the Maxwell-Higgs equation in the form of a wave equation, we begin by defining the spherical Laplacian as follows:
\begin{eqnarray}
\Delta_{\mathcal{S}}=\partial^2_\theta+\frac{1}{\text{sin}^2\theta}\partial^2_\varphi+\text{cot}\theta \partial_\theta~.
\end{eqnarray}
Then \eqref{mideq} becomes
\begin{eqnarray}\label{mideq2}
\partial^2_{t}\Phi_0-\partial^2_{r*}\Phi_0+V\Delta_{\mathcal{S}}\Phi_{0}&=&-M_1\left(if\left( {{D}_{2}}\phi \overline{\phi }-\phi \overline{{{D}_{2}}\phi } \right)\right)\notag\\
&&- M_1 \left(i\frac{f}{\text{sin}\theta}\left( {{D}_{3}}\phi \overline{\phi }-\phi \overline{{{D}_{3}}\phi } \right)\right)\notag\\
&&+ L\left(ir^2\left( {{D}_{0}}\phi \overline{\phi }-\phi \overline{{{D}_{0}}\phi } \right)+ir^2\left( {{D}_{1}}\phi \overline{\phi }-\phi \overline{{{D}_{1}}\phi } \right)\right)~.\notag\\
\end{eqnarray}
Let us define symbol $\tilde{\Box}$ such that
\begin{eqnarray}
\tilde{\Box}u\equiv\partial^2_{t}u-\partial^2_{r*}u+V\slashed{\Delta}u~.
\end{eqnarray}
So in this notation, we can write down \eqref{mideq2} in the form 
\begin{eqnarray}\label{boxtilde_u}
\tilde{\Box}u=\rho(\phi, \bar{\phi})~,
\end{eqnarray}
where
\begin{eqnarray}
\rho(\phi, \bar{\phi})&=&-M_1\left(if\left( {{D}_{2}}\phi \overline{\phi }-\phi \overline{{{D}_{2}}\phi } \right)\right)\notag\\
&&- M_1 \left(i\frac{f}{\text{sin}\theta}\left( {{D}_{3}}\phi \overline{\phi }-\phi \overline{{{D}_{3}}\phi } \right)\right)\notag\\
&&+ L\left(ir^2\left( {{D}_{0}}\phi \overline{\phi }-\phi \overline{{{D}_{0}}\phi } \right)+ir^2\left( {{D}_{1}}\phi \overline{\phi }-\phi \overline{{{D}_{1}}\phi } \right)\right)~.
\end{eqnarray}

{\allowdisplaybreaks
\section{The \texorpdfstring{ $L^\infty$}{} Estimate}
\label{sec:estimate}
In this section, we focus on establishing the $L^\infty$ estimates of the scalar field $\phi$ and the vector field $A$. For the solution of \eqref{boxtilde_u}, there are two significant quantities, one conserved and the other controlled. These quantities are related to the time translation vector field $T = \partial_t$ and the Morawetz vector field, 
\begin{eqnarray}\label{K}
K = (t^2 + r^{*2}) \partial_t + 2tr^* \partial_{r^*} u~.
\end{eqnarray}
Next, we define
\begin{eqnarray}
e\equiv\left(\partial_t u\right)^2+\left(\partial_r^* u\right)^2+V|\slashed{\nabla} u|^2+|D_0 \phi|^2+|D_i \phi|^2~,
\end{eqnarray}
\begin{eqnarray}
e_\mathcal{C}\equiv\frac{1}{2}\left(t^2+r*^2\right) e+2tr^*\partial_t u \partial_{r^*}u+e~,
\end{eqnarray}
Subsequently, we define the associated energy and conformal energy as determined by the integrals of their respective densities.
\begin{eqnarray}\label{energy}
E[u,\phi](t)\equiv\frac{1}{2}\int\limits_{{{\Sigma }_{t}}}{ed{{r}^{*}}{{d}^{2}}\omega }~,
\end{eqnarray}
\begin{eqnarray}\label{energyEC}
E_\mathcal{C}[u,\phi](t)\equiv\frac{1}{2}\int\limits_{{{\Sigma }_{t}}}{e_\mathcal{C}d{{r}^{*}}{{d}^{2}}\omega }~,
\end{eqnarray}
where $\Sigma_t=\left\{ t \right\}\times \mathbb{R}\times {\mathcal{S}^{2}}$. 

\begin{lemma}\label{lema2}
The energy solution for \eqref{boxtilde_u} is conserved. In particular, for any $t \in \mathbb{R}$, the energy $E[u, \phi]$ satisfies
    \begin{eqnarray}
        E[u,\phi](t)=E[u,\phi](t_0)=E_0, \forall t\in \mathbb{R}~.
    \end{eqnarray}
\end{lemma}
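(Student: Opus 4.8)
The plan is to show that the map $t\mapsto E[u,\phi](t)$ is constant by differentiating under the integral sign and proving that the resulting integrand is a pure spatial divergence whose boundary contributions vanish. Conceptually this is the statement that the current obtained by contracting the (on-shell divergence-free) stress-energy tensor \eqref{TEM} with the Killing field $T=\partial_t$ of the static metric \eqref{RNmetric} is conserved: the identity $\nabla_\mu T^{\mu\nu}=0$ (which follows from \eqref{eom1}--\eqref{eom2}) together with $\nabla_{(\mu}T_{\nu)}=0$ gives a divergence-free current, and Stokes' theorem on the slab $\bigcup_{s\in[t_0,t]}\Sigma_s$ forces the flux through $\Sigma_t$ to equal that through $\Sigma_{t_0}$. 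I would realize this concretely through the multiplier (vector field) method applied to the two equations of motion separately and then combined.

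For the middle-component part, I would pair the wave-like equation \eqref{boxtilde_u}, $\tilde{\Box}u=\rho(\phi,\bar\phi)$, with $\partial_t u$ (taking real parts, since $u=\Phi_0$ is complex) and integrate over $\Sigma_t$. The products $\partial_t u\,\partial_t^2 u$ and $\partial_t u\,\partial_{r^*}^2 u$ combine, after one integration by parts in $r^*$, into $\tfrac12\partial_t[(\partial_t u)^2+(\partial_{r^*}u)^2]$ plus the total $r^*$-derivative $\partial_{r^*}(\partial_t u\,\partial_{r^*}u)$. The angular term $V\,\partial_t u\,\slashed{\Delta} u$ is treated by Green's identity on the closed sphere $\mathcal{S}^2$ (which produces no boundary contribution): using self-adjointness of $\slashed{\Delta}$ and $\int_{\mathcal{S}^2}\slashed{\nabla}(\partial_t u)\cdot\slashed{\nabla} u\,d^2\omega=\tfrac12\partial_t\int_{\mathcal{S}^2}|\slashed{\nabla} u|^2\,d^2\omega$ reproduces the $V|\slashed{\nabla} u|^2$ term of the density $e$. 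Collecting the total-derivative terms as boundary fluxes, this yields
\begin{eqnarray}
\tfrac12\frac{d}{dt}\int_{\Sigma_t}\big[(\partial_t u)^2+(\partial_{r^*}u)^2+V|\slashed{\nabla} u|^2\big]\,dr^*d^2\omega &=& \mathrm{Re}\int_{\Sigma_t}\partial_t u\,\rho\,dr^*d^2\omega + (\text{boundary terms}).
\end{eqnarray}

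For the scalar part, I would pair the covariant wave equation \eqref{eom2}, $D^\mu D_\mu\phi=0$, with $\overline{D_0\phi}$ (equivalently, use the scalar block of \eqref{TEM}), take real parts, and integrate over $\Sigma_t$. Commuting $\partial_t$ through the gauge-covariant derivatives at the cost of field-strength terms and integrating by parts in $r^*$ and on $\mathcal{S}^2$, this produces $\tfrac12\tfrac{d}{dt}\int_{\Sigma_t}[|D_0\phi|^2+|D_i\phi|^2]$ plus boundary terms plus interaction terms in which $F$ is contracted against the Noether current $j_\gamma=i(D_\gamma\phi\,\bar\phi-\phi\,\overline{D_\gamma\phi})$.

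The main obstacle, and the heart of the lemma, is to show that the source contribution $\mathrm{Re}\int\partial_t u\,\rho$ from the middle-component equation cancels exactly against these scalar interaction terms, leaving only boundary fluxes. This is where the precise algebraic form of $\rho$ (the operators $M_1$ and $L$ acting on $f j_2$, $\tfrac{f}{\sin\theta}j_3$, $r^2 j_0$, and $r^2 j_1$) must be matched—after further integration by parts to move $M_1$ and $L$ off the currents—with the Lorentz-force terms $F^{\nu}{}_{\gamma}j^\gamma$ arising in the scalar computation; it is the concrete manifestation of $\nabla_\mu T^{\mu\nu}=0$ for the coupled system and relies on \eqref{eom1} identifying $\nabla^\alpha F_{\alpha\gamma}$ with $-j_\gamma$. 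Once this cancellation is in place, it remains to argue that every boundary term vanishes: the angular fluxes drop out because $\mathcal{S}^2$ is closed, while the $r^*$-fluxes at $r^*\to+\infty$ (spatial infinity) and $r^*\to-\infty$ (the event horizon, where $f\to0$) are controlled by the finite-energy hypothesis on the data. Hence $\tfrac{d}{dt}E[u,\phi]=0$, and therefore $E[u,\phi](t)=E[u,\phi](t_0)=E_0$ for all $t$.
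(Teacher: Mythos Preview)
Your approach---differentiate $E$ in time, multiply \eqref{boxtilde_u} by $\partial_t u$ and \eqref{eom2} by $\overline{D_0\phi}$, integrate by parts in $r^*$ and on $\mathcal{S}^2$, and discard the boundary fluxes---is exactly what the paper does; indeed you have spelled out the structure in far more detail than the paper's one-line proof (``using integration by parts and the equations of motion \dots\ the terms can cancel out'').

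One point deserves care. You justify the key cancellation between $\mathrm{Re}\int\partial_t u\,\rho$ and the Lorentz-force terms from the scalar sector by appealing to $\nabla_\mu T^{\mu\nu}=0$ for the full stress tensor \eqref{TEM}. But the density $e$ in \eqref{energy} carries only the middle spin component $u=\Phi_0$, not the full Maxwell energy $F_{\gamma}^{\ \nu}F^{0\gamma}-\tfrac14 g^{0\nu}F_{\alpha\beta}F^{\alpha\beta}$; hence $E[u,\phi]$ is not literally the $T$-flux through $\Sigma_t$, and the divergence identity for \eqref{TEM} does not by itself close the argument. The cancellation you need must therefore be checked directly from the explicit form of $\rho$ and the compacted equations of Lemma~\ref{maxwellcompact} (moving $M_1$ and $L$ off the currents and matching against the $F\cdot j$ terms). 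Neither you nor the paper carry this computation out, but that is where the actual content of the lemma lives, and the stress-tensor heuristic should be regarded as motivation rather than proof.
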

\begin{proof}
We can write down \eqref{energy} as
\begin{eqnarray}
   E[u,\phi](t)=\frac{1}{2}\int\limits_{{{\Sigma }_{t}}}{\left( {{\left( {{\partial }_{t}}u \right)}^{2}}+{{\left( \partial _{r}^{*}u \right)}^{2}}+V|\slashed{\nabla }u{{|}^{2}}+{{\left| {{D}_{0}}\phi  \right|}^{2}}+{{\left| {{D}_{i}}\phi  \right|}^{2}} \right)d{{r}^{*}}{{d}^{2}}\omega }~.
\end{eqnarray}
Let us consider the first derivative of $E[u](t)$ with respect to time,
\begin{eqnarray}
     \frac{dE[u,\phi](t)}{dt}&=&\int_{{{\Sigma }_{t}}}{\left( ({{\partial }_{t}}u)({{\partial }_{t}}{{\partial }_{t}}u)+({{\partial }_{{{r}^{*}}}}u)({{\partial }_{t}}{{\partial }_{{{r}^{*}}}}u)+V(\slashed{\nabla }u)({{\partial }_{t}}\slashed{\nabla }u) \right.} \notag\\ 
 && +\left. ({{D}_{0}}\phi )({{\partial }_{t}}{{D}_{0}}\phi )+({{D}_{i}}\phi )({{\partial }_{t}}{{D}_{i}}\phi ) \right)d{{r}^{*}}{{d}^{2}}\omega~.
\end{eqnarray}
Using integration by parts and the equations of motion, and given boundary conditions at spacial infinity, we can show that the terms can cancel out. Thus, the energy $E[u,\phi](t)$ is conserved.
\end{proof}
We are now ready to derive the $L^\infty$ estimate for the fields.
\begin{preposition}
    Define 
    \begin{eqnarray}
  {{\left\| f \right\|}_{L_{loc}^{\infty }}}=\underset{\left[ {{t}_{1}},{{t}_{2}} \right]\times \left\{ \frac{1}{2}t\le \left| {{r}_{*}} \right|\le \frac{3}{4}t \right\}\times {{S}^{2}}}{\mathop{\sup }}\,|f| 
\end{eqnarray}
For $F$ and $\phi$ satisfying \eqref{eom1} and \eqref{eom2}, the following inequalities hold:
 \begin{eqnarray}\label{phiinfty}
   {{\left\| \phi  \right\|}_{L_{loc}^{\infty }}}&\le& \tilde{C}\left( 1+t \right) \\ 
 \label{Ainfty}{{\left\| A \right\|}_{L_{loc}^{\infty }}}&\le& \tilde{C}\left( 1+t \right)
\end{eqnarray}
where $\tilde{C}$ is a constant function that depends only on initial data $E_0$.
\end{preposition}
\begin{proof}
To establish the proof of the lemma, we invoke the Sobolev Embedding Theorem on a Riemannian manifold, as outlined in the following result:
\begin{lemma}
    Let $M_n$ be a complete Riemannian manifold with injectivity radius $\delta_0>0$ and sectional curvature $K$, satisfying the bound $K\leq b^2$. There exists a constant $C(q)$ such that for all $g \in \mathcal{D}(M_n)$:
    \begin{eqnarray}
        \sup |g| \leq C(q) \|g\|_{H^q} \quad \text{if} \quad q > n.
    \end{eqnarray}
\end{lemma}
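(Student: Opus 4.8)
The plan is to deduce this from the classical Euclidean Sobolev embedding $H^{q}(\mathbb{R}^{n})\hookrightarrow C^{0}_{b}(\mathbb{R}^{n})$, which holds whenever $q>n/2$ and hence a fortiori for $q>n$, by transporting it to $M_{n}$ through geodesic normal coordinates and then patching the local estimates together. The two geometric hypotheses enter precisely to make every constant uniform in the base point: the lower bound $\delta_{0}>0$ on the injectivity radius guarantees that for each $x\in M_{n}$ the exponential map is a diffeomorphism from the Euclidean ball $B(0,\delta_{0})\subset T_{x}M_{n}\cong\mathbb{R}^{n}$ onto the geodesic ball $B(x,\delta_{0})$, and the bound $K\le b^{2}$ controls the spreading of geodesics and thus the metric coefficients read off in this chart.

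First I would fix $x$ and work in the normal chart $\exp_{x}^{-1}$ on $B(x,\delta_{0}/2)$. Via Rauch--G\"unther comparison the curvature bound yields a two-sided estimate $\Lambda^{-1}\delta_{ij}\le g_{ij}\le\Lambda\delta_{ij}$ with $\Lambda=\Lambda(n,b,\delta_{0})$ independent of $x$, so that the Riemannian volume element, the gradient, and the higher-order derivatives entering $\|\cdot\|_{H^{q}}$ are all comparable to their flat counterparts with uniform constants. Composing with a fixed cutoff $\chi$ that is supported in $B(0,\delta_{0}/2)$ and equals $1$ on $B(0,\delta_{0}/4)$, the Euclidean embedding applied to $\chi\,(g\circ\exp_{x})$ gives a local estimate $\sup_{B(x,\delta_{0}/4)}|g|\le C\,\|g\|_{H^{q}(B(x,\delta_{0}/2))}$ with $C=C(q,n,b,\delta_{0})$ that does not depend on the base point.

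Next I would globalize. A Vitali-type argument produces a maximal $\tfrac{1}{4}\delta_{0}$-separated family $\{x_{i}\}$, so that the balls $B(x_{i},\delta_{0}/4)$ cover $M_{n}$ while the enlarged balls $B(x_{i},\delta_{0}/2)$ have bounded multiplicity $N=N(n,b,\delta_{0})$; the injectivity radius bound and the volume comparison furnished by the curvature bound are what keep $N$ finite. Taking the supremum of the local estimates over $i$ and using that each point of $M_{n}$ lies in at most $N$ of the enlarged balls, whence $\sum_{i}\|g\|_{H^{q}(B(x_{i},\delta_{0}/2))}^{2}\le N\,\|g\|_{H^{q}(M_{n})}^{2}$, I obtain $\sup_{M_{n}}|g|\le C(q)\,\|g\|_{H^{q}(M_{n})}$ for every $g\in\mathcal{D}(M_{n})$, which is the assertion.

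The step I expect to be the hardest is the comparison, in each chart, between the intrinsic $H^{q}$ norm built from covariant derivatives and the flat $W^{q,2}$ norm when $q\ge 2$: this requires uniform control not only of $g_{ij}$ but of the Christoffel symbols and their derivatives up to order $q-1$, which a bare upper bound $K\le b^{2}$ does not by itself supply, and the same one-sided bound is strictly too weak to force the two-sided metric equivalence and the bounded multiplicity used above. The clean way to close this gap is to replace geodesic normal coordinates by harmonic coordinates, for which $\delta_{0}>0$ together with two-sided curvature control yields uniform $C^{1,\alpha}$ bounds on $g_{ij}$ and, after elliptic bootstrapping, uniform bounds on all the derivatives needed for order $q$. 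In the situation where the Preposition actually invokes this lemma, however, the underlying region $[t_{1},t_{2}]\times\{\tfrac{1}{2}t\le|r_{*}|\le\tfrac{3}{4}t\}\times S^{2}$ is compact, so the injectivity radius, the full curvature tensor and all its derivatives are automatically bounded and the covering is finite; the obstruction then disappears and the estimate follows directly.
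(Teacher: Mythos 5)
The paper itself offers no proof of this lemma: its ``proof'' is a one-line citation to Aubin \cite{aubin}, of which the statement is a near-verbatim transcription. Your localize-and-patch argument is therefore not so much a different route as a reconstruction of the proof the paper outsources, and it is essentially the argument in the cited source: uniform charts of definite size furnished by the injectivity radius lower bound, the Euclidean Sobolev embedding in each chart, and a covering of bounded multiplicity. Two points sharpen your own diagnosis. First, your suspicion about the one-sided hypothesis $K\le b^2$ is correct, and the defect lies in the statement as transcribed rather than in your argument: Rauch--G\"unther comparison from an upper curvature bound yields only the lower metric bound $g_{ij}\ge c\,\delta_{ij}$ on small geodesic balls, whereas the upper bound $g_{ij}\le\Lambda\,\delta_{ij}$ --- which you need both to dominate the flat Sobolev norm by the intrinsic one and to obtain the bounded multiplicity $N$ (the latter via an upper volume bound of Bishop--Gromov type, which requires a lower curvature bound) --- does not follow; Aubin's actual hypothesis is a two-sided curvature bound, and with that correction your two-sided equivalence $\Lambda^{-1}\delta_{ij}\le g_{ij}\le\Lambda\delta_{ij}$ and the Vitali argument go through as you describe. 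Second, the condition $q>n$ betrays that in Aubin's notation $H^q$ is the first-order space $W^{1,q}$ (one derivative in $L^q$), not the order-$q$ $L^2$-space that you assume and that the paper itself uses when it applies the lemma with $q=4$ and the norm \eqref{phiH4}; under the $W^{1,q}$ reading the chart comparison involves only $g_{ij}$, $g^{ij}$ and $\sqrt{\det g}$ in $C^0$, so the difficulty you flag about Christoffel symbols and derivatives of curvature simply does not arise in the cited theorem. That difficulty is genuine only under the order-$q$ reading, where your proposed remedies --- harmonic coordinates with elliptic bootstrapping, or, in the paper's actual application on the compact region $[t_1,t_2]\times\{\tfrac12 t\le|r_*|\le\tfrac34 t\}\times S^2$, plain compactness --- are indeed the standard fixes. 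In short: your proof is sound once the hypothesis is corrected to a two-sided bound, it supplies the argument the paper omits, and it correctly identifies both the misquoted hypothesis and the notational mismatch between the lemma and its subsequent use.
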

\begin{proof}
 For the proof of this lemma, we direct the reader to see \cite{aubin}   
\end{proof}
Subsequently, by applying the lemma to $\phi$ and $A$, we obtain the following result
\begin{eqnarray}
        {{\left\| \phi  \right\|}_{L_{loc}^{\infty }}} \leq C(4) \|\phi\|_{H^4_{loc}}
    \end{eqnarray}
    \begin{eqnarray}
       {{\left\| A \right\|}_{L_{loc}^{\infty }}} \leq C(4) \|A\|_{H^4_{loc}}
    \end{eqnarray}
where
\begin{eqnarray}\label{phiH4}
    \|\phi\|_{H^4_{loc}}^2 = \|\phi\|_{L^2_{loc}}^2 + \|\nabla \phi\|_{L^2_{loc}}^2 + \|\nabla^2 \phi\|_{L^2_{loc}}^2 + \|\nabla^3 \phi\|_{L^2_{loc}}^2 + \|\nabla^4 \phi\|_{L^2_{loc}}^2~,
\end{eqnarray}
\begin{eqnarray}\label{AH4}
    \|A\|_{H^4_{loc}}^2 = \|A\|_{L^2_{loc}}^2 + \|\nabla A\|_{L^2_{loc}}^2 + \|\nabla^2 A\|_{L^2_{loc}}^2 + \|\nabla^3 A\|_{L^2_{loc}}^2 + \|\nabla^4 A\|_{L^2_{loc}}^2~.
\end{eqnarray}
To get bound on \eqref{phiH4} and \eqref{AH4}, we divide the proof into the following lemmas
\begin{lemma}\label{lemmaL2Aphi}
    Define 
    \begin{eqnarray}
   {{\left\| u \right\|}_{L_{loc}^{p}}}={{\left( \int\limits_{\left[ {{t}_{1}},{{t}_{2}} \right]\times \left\{ \frac{1}{2}t\le \left| {{r}_{*}} \right|\le \frac{3}{4}t \right\}\times {{S}^{2}}}{{{\left| u \right|}^{p}}d\varsigma } \right)}^{1/p}}~.
\end{eqnarray}
For $F$ and $\phi$ satisfying \eqref{eom1} and \eqref{eom2}, the following inequalities hold:
 \begin{eqnarray}\label{estiphiL2}
  {{\left\| \phi  \right\|}_{L_{loc}^{2}}}\le \Tilde{C}\left( 1+t \right)~,
\end{eqnarray}
 \begin{eqnarray}\label{estiAL2}
  {{\left\| A  \right\|}_{L_{loc}^{2}}}\le \Tilde{C}\left( 1+t \right)~.
\end{eqnarray}
\end{lemma}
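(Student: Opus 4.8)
The plan is to use the conserved energy from Lemma~\ref{lema2} as the only uniform-in-time input and to upgrade the gradient control it provides into control of the undifferentiated fields. Indeed, $E[u,\phi](t)=E_0$ bounds, uniformly in $t$, the spatial integrals of $|D_0\phi|^2+|D_i\phi|^2$ and of $(\partial_t u)^2+(\partial_{r^*}u)^2+V|\slashed{\nabla}u|^2$. None of these controls $\phi$, $u$ or $A$ themselves, so the whole proof is an exercise in trading a derivative for a weight. That weight is the source of the factor $(1+t)$: on the local region one has $r^*\sim t$, so a Hardy inequality, which produces $\int|\phi|^2/r^{*2}$, must be paid for by a factor $r^{*2}\sim t^2$ when one wants $\int|\phi|^2$ itself.

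For \eqref{estiphiL2} I would first exploit that $|\phi|$ is gauge invariant. Since $\overline{\phi}\,D_\mu\phi-\phi\,\overline{D_\mu\phi}$ is purely imaginary, one has $\partial_\mu|\phi|^2=2\,\mathrm{Re}(\overline{\phi}\,D_\mu\phi)$, whence the diamagnetic inequality $|\partial_\mu|\phi||\le|D_\mu\phi|$ holds pointwise; consequently $\int_{\Sigma_t}|\partial_{r^*}|\phi||^2\,dr^*d^2\omega\le 2E_0$. Integrating from spatial infinity, where the finite-energy data force decay, a one-dimensional Hardy inequality in $r^*$ gives $\int_{\Sigma_t}|\phi|^2/r^{*2}\lesssim E_0$; multiplying by the bound $r^{*2}\le\tfrac{9}{16}t^2$ valid on the local region and integrating over the compact interval $[t_1,t_2]$ yields \eqref{estiphiL2}. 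The same argument applied to $u=\Phi_0$, whose derivatives are likewise energy-controlled, bounds $\|u\|_{L^2_{loc}}$, hence the field-strength components $F_{10},F_{23}$ packaged in $\Phi_0$.

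The estimate \eqref{estiAL2} for $A$ is the main obstacle, because the energy is gauge invariant and therefore sees only the field strength $F=dA$, never the potential. I would break the gauge freedom by working in the temporal gauge $A_0=0$, in which $F_{0i}=\partial_t A_i$ up to Christoffel terms, and reconstruct the potential by integrating in time, $A_i(t)=A_i(t_0)+\int_{t_0}^{t}F_{0i}\,dt'+\cdots$. Minkowski's inequality together with the uniform bound on $\|F\|_{L^2}$ (obtained from $\Phi_0,\Phi_{\pm1}$ via the constraint $\partial_{r^*}\Phi_0=\tfrac12(\bar{M}\Phi_1+M_1\Phi_{-1})$ and the Hardy step above) then gives $\|A_i(t)\|_{L^2_{loc}}\lesssim\|A(t_0)\|_{L^2}+\int_{t_0}^{t}\|F\|_{L^2}\,dt'\lesssim\tilde{C}(1+t)$, which is precisely the asserted linear-in-$t$ growth. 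The delicate points are that this reconstruction cannot be circumvented, so the linear growth in \eqref{estiAL2} is genuinely a feature of recovering the potential from the curvature rather than an artifact, and that each Hardy / fundamental-theorem-of-calculus step must be justified by the decay of the fields as $r^*\to\infty$ and shown to be uniform as the local region translates with $t$.
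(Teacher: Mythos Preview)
Your treatment of $A$ is precisely the paper's: fix temporal gauge so that $\partial_t A_i=F_{0i}$, bound $\|F_{0i}\|_{L^2}$ by the conserved energy, and integrate in $t$ to produce the linear growth.

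For $\phi$, however, the paper does not go through a spatial Hardy inequality. It treats $\phi$ in exact parallel to $A$: differentiate $\|\phi\|_{L^2_{loc}}^2$ in time, apply H\"older to get $\partial_t\|\phi\|_{L^2_{loc}}\le\|\partial_t\phi\|_{L^2_{loc}}$, bound the right side by $\|D_0\phi\|_{L^2}\le E_0^{1/2}$ (in temporal gauge $\partial_t\phi=D_0\phi$; alternatively your diamagnetic inequality, used in the $t$-direction rather than $r^*$, gives the gauge-invariant version), and integrate in $t$. This unifies \eqref{estiphiL2} and \eqref{estiAL2} into a single mechanism---time integration of an energy-controlled time derivative---and dispenses with the Hardy step and the $r^{*2}\sim t^2$ weight transfer entirely. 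Your route is valid and has the virtue of living on a single time slice, so it would survive even if the time derivative were not directly in the energy; the paper's route is shorter and makes the origin of the factor $(1+t)$ transparent as the length of the time interval of integration. Your digression on $\|u\|_{L^2_{loc}}$ and the constraint equation for $\Phi_0$ is extraneous to this lemma, which concerns only $\phi$ and $A$.
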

\begin{proof}
    Let us consider the derivative of $\left\| \phi  \right\|_{L_{loc}^{2}}^{2}$ with respect to $t$
\begin{eqnarray}
    \frac{\partial \left\| \phi  \right\|_{L_{loc}^{2}}^{2}}{\partial t}=\int\limits_{\left[ {{t}_{1}},{{t}_{2}} \right]\times \left\{ \frac{1}{2}t\le \left| {{r}_{*}} \right|\le \frac{3}{4}t \right\}\times {{S}^{2}}}{2\phi \frac{\partial \phi }{\partial t}d\varsigma }~,
\end{eqnarray}
Using H\"older inequality we have
\begin{eqnarray}
  2{{\left\| \phi  \right\|}_{L_{loc}^{2}}}\frac{\partial {{\left\| \phi  \right\|}_{L_{loc}^{2}}}}{\partial t}&\le& 2{{\left( \int\limits_{{{\Omega }_{loc}}}{{{\left| \phi  \right|}^{2}}d\varsigma } \right)}^{1/2}}{{\left( \int\limits_{{{\Omega }_{loc}}}{{{\left| \frac{\partial \phi }{\partial t} \right|}^{2}}d\varsigma } \right)}^{1/2}} \notag\\ 
  \frac{\partial {{\left\| \phi  \right\|}_{L_{loc}^{2}}}}{\partial t}&\le& {{\left( \int\limits_{{{\Omega }_{loc}}}{{{\left| \frac{\partial \phi }{\partial t} \right|}^{2}}d\varsigma } \right)}^{1/2}} \notag\\
    & \le& {{\left( \int\limits_{{{\Omega }_{loc}}}{{{\left| \operatorname{Re}\left( {{D}_{0}}\phi  \right) \right|}^{2}}d\varsigma } \right)}^{1/2}} \\ 
 & \le& E_{0}^{1/2} ~,
\end{eqnarray}
where for simplicity ${{\Omega }_{loc}}=\left[ {{t}_{1}},{{t}_{2}} \right]\times \left\{ \frac{1}{2}t\le \left| {{r}_{*}} \right|\le \frac{3}{4}t \right\}\times {{S}^{2}}$.
Thus, integrating over $t$, we obtain
\begin{eqnarray}
    {{\left\| \phi  \right\|}_{L_{loc}^{2}}}\le CE_{0}^{1/2}\left( 1+t \right)~.
\end{eqnarray}
This completes the proof of \eqref{estiphiL2}. The same procedure also applies to prove \eqref{estiAL2}, 
\begin{eqnarray}
  \frac{\partial \left\| A \right\|_{L_{loc}^{2}}^{2}}{\partial t}&\le& 2{{\left( \int\limits_{{{\Omega }_{loc}}}{{{\left| A \right|}^{2}}d\varsigma } \right)}^{1/2}}{{\left( \int\limits_{{{\Omega }_{loc}}}{{{\left| \frac{\partial A}{\partial t} \right|}^{2}}d\varsigma } \right)}^{1/2}} \notag\\ 
 \frac{\partial {{\left\| A \right\|}_{L_{loc}^{2}}}}{\partial t}&\le& {{\left( \int\limits_{{{\Omega }_{loc}}}{{{\left| \frac{\partial A}{\partial t} \right|}^{2}}d\varsigma } \right)}^{1/2}} \notag\\ 
 & \le& {{\left( \int\limits_{{{\Omega }_{loc}}}{{{\left| {{F}_{0i}} \right|}^{2}}d\varsigma } \right)}^{1/2}} \notag\\ 
 & \le& E_{0}^{1/2} \notag\\ 
 {{\left\| A \right\|}_{L_{loc}^{2}}}&\le& CE_{0}^{1/2}\left( 1+t \right)  
\end{eqnarray}
\end{proof}

\begin{lemma}\label{lemmaderif1Aphi}
Let ${{\left\| u \right\|}_{L_{loc}^{p}}}$ as in the definition in lemma \ref{lemmaL2Aphi} and $\Tilde{C}$ is a function that only depend on the initial data $E_0$. For $F$ and $\phi$ satisfying \eqref{eom1} and \eqref{eom2}, the following inequalities hold:
 \begin{eqnarray}\label{estiderif1phiL2}
  {{\left\| \nabla \phi  \right\|}_{L_{loc}^{2}}}\le \tilde{C} ~,
\end{eqnarray}
 \begin{eqnarray}\label{estiderif1AL2}
  {{\left\| \nabla A \right\|}_{L_{loc}^{2}}}\le \tilde{C} ~,
\end{eqnarray}
 \begin{eqnarray}\label{estiderif2AL2}
  {{\left\| \nabla\nabla A \right\|}_{L_{loc}^{2}}}\le \tilde{C} {{\left\| \phi  \right\|}_{L_{loc}^{\infty}}}~,
\end{eqnarray}
\begin{eqnarray}\label{estiderif2phiL2}
  {{\left\| \nabla\nabla \phi  \right\|}_{L_{loc}^{2}}}\le \tilde{C}\left({{\left\|  A  \right\|}_{L_{loc}^{\infty}}}+{{\left\| \phi  \right\|}_{L_{loc}^{\infty}}}\right) ~.
\end{eqnarray}
\end{lemma}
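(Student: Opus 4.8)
The plan is to split the four bounds into the two first-order estimates \eqref{estiderif1phiL2}--\eqref{estiderif1AL2}, which I expect to read off directly from the conserved energy, and the two second-order estimates \eqref{estiderif2AL2}--\eqref{estiderif2phiL2}, which must be extracted from the equations of motion \eqref{eom1}--\eqref{eom2}.

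For the first-order bounds, note that the gauge-covariant derivative $D\phi$ and the field strength $F=dA$ are exactly the first-derivative quantities appearing in the energy density $e$: the terms $|D_0\phi|^2+|D_i\phi|^2$ furnish $|\nabla\phi|^2$, while $(\partial_t u)^2+(\partial_{r^*}u)^2+V|\slashed{\nabla} u|^2$ furnish the components of $F$, hence of $\nabla A$. By Lemma \ref{lema2} the energy is conserved at the level of each slice $\Sigma_t$, and the integrated local energy decay estimate converts this slicewise control into a bound for the spacetime integral of $e$ over $\Omega_{loc}$ by $CE_0$. Restricting that integral to the derivative terms gives $\|\nabla\phi\|_{L^2_{loc}},\|\nabla A\|_{L^2_{loc}}\le CE_0^{1/2}=:\tilde C$, which is \eqref{estiderif1phiL2}--\eqref{estiderif1AL2}. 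This is the same mechanism as in Lemma \ref{lemmaL2Aphi}, except that there the undifferentiated field grows like $(1+t)$ whereas here the derivative norms stay bounded.

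For the second derivative of $A$ I would feed in the Maxwell equation \eqref{eom1}, $\nabla^\alpha F_{\alpha\gamma}=-i(D_\gamma\phi\overline\phi-\phi\overline{D_\gamma\phi})$, whose left-hand side is built from $\nabla\nabla A$ and whose right-hand side is the current $J_\gamma$ with the pointwise bound $|J_\gamma|=2|\operatorname{Im}(\overline\phi D_\gamma\phi)|\le 2|\phi|\,|D_\gamma\phi|$. Squaring, integrating over $\Omega_{loc}$ and pulling out $\|\phi\|_{L^\infty_{loc}}$ leaves $\int_{\Omega_{loc}}|D\phi|^2\,d\varsigma\le CE_0$, giving $\|\nabla\nabla A\|_{L^2_{loc}}\le\tilde C\|\phi\|_{L^\infty_{loc}}$, i.e.\ \eqref{estiderif2AL2}. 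Symmetrically, I would rewrite the scalar equation \eqref{eom2} as the wave equation $\Box\phi=i(\nabla^\mu A_\mu)\phi+2iA^\mu\nabla_\mu\phi+A^\mu A_\mu\phi$, so that its principal part controls $\nabla\nabla\phi$; estimating the right-hand side by $|\nabla A|\,|\phi|+|A|\,|\nabla\phi|+|A|^2|\phi|$ and inserting the already-proven \eqref{estiderif1phiL2}--\eqref{estiderif1AL2} for $\|\nabla\phi\|_{L^2_{loc}},\|\nabla A\|_{L^2_{loc}}$ yields the linear-in-$L^\infty$ bound \eqref{estiderif2phiL2}.

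The hard part will be the second-derivative estimate for $\phi$. The self-interaction term $A^\mu A_\mu\phi$ is genuinely quadratic in $A$, so a crude estimate produces $\|A\|_{L^\infty_{loc}}^2\,\|\phi\|_{L^2_{loc}}$ with $\|\phi\|_{L^2_{loc}}\le\tilde C(1+t)$, rather than the clean factor $\|A\|_{L^\infty_{loc}}+\|\phi\|_{L^\infty_{loc}}$ asserted in \eqref{estiderif2phiL2}; reconciling the two requires either absorbing the quadratic contribution through the bootstrap in which these $H^4_{loc}$ norms are reinserted via the Sobolev embedding, or carrying the extra factor through to the final conformal-energy step. A secondary technical point is the passage from the contracted quantities $\nabla^\alpha F_{\alpha\gamma}$ and $\Box\phi$ to the full Hessians $\nabla\nabla A$ and $\nabla\nabla\phi$, which I would justify by a div--curl/elliptic argument using the Bianchi identity and a fixed gauge, controlling the boundary terms on $\partial\Omega_{loc}$.
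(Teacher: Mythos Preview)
Your treatment of \eqref{estiderif1phiL2}, \eqref{estiderif1AL2} and \eqref{estiderif2AL2} coincides with the paper's: the first-order norms are read off from the energy density and conservation, and the second derivative of $A$ is controlled by writing the right-hand side of \eqref{eom1} as $\phi\,D\phi$ and applying H\"older with $\|\phi\|_{L^\infty_{loc}}$ pulled out.

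The one place where you diverge from the paper is \eqref{estiderif2phiL2}, and it is exactly the point you flag as ``the hard part''. You fully expand $D^\mu D_\mu\phi=0$ down to $\nabla\phi$ and $A$, producing the quadratic term $A^\mu A_\mu\phi$. The paper stops one step earlier: writing $D_\mu=\nabla_\mu-iA_\mu$ once at the outer derivative gives
\[
\Box\phi = i(\nabla^\mu A_\mu)\phi + iA^\mu\nabla_\mu\phi + iA^\mu D_\mu\phi,
\]
so the three right-hand terms are bounded by $\|\phi\|_{L^\infty_{loc}}\|\nabla A\|_{L^2_{loc}}$, $\|A\|_{L^\infty_{loc}}\|\nabla\phi\|_{L^2_{loc}}$ and $\|A\|_{L^\infty_{loc}}\|D\phi\|_{L^2_{loc}}$, each linear in the $L^\infty_{loc}$ norms after invoking \eqref{estiderif1phiL2}--\eqref{estiderif1AL2} and the energy bound on $D\phi$. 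Leaving the inner $D\phi$ unexpanded is the device that kills the $|A|^2|\phi|$ contribution; your worry about needing a bootstrap or carrying an extra factor of $\|A\|_{L^\infty_{loc}}$ simply disappears.

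On your secondary point: the paper does not carry out any div--curl or elliptic step to pass from $\nabla^\alpha F_{\alpha\gamma}$ or $\Box\phi$ to the full Hessians; it treats the contracted second-order operators as directly controlling $\|\nabla\nabla A\|_{L^2_{loc}}$ and $\|\nabla\nabla\phi\|_{L^2_{loc}}$. You are being more careful than the paper here.
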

\begin{proof}
The proof of \eqref{estiderif1phiL2} and \eqref{estiderif1AL2} are straightforward because ${{\left\| \nabla \phi  \right\|}_{L_{loc}^{2}}}$ and ${{\left\| \nabla A \right\|}_{L_{loc}^{2}}}$ are already bounded within the energy term $E_0$. To prove \eqref{estiderif2AL2}, we apply \eqref{eom1} and using the H\"older inequality to get
\begin{eqnarray}
   {{\left\| \nabla \nabla A \right\|}_{L_{loc}^{2}}}&\le& {{\left\| \phi D\phi  \right\|}_{L_{loc}^{2}}} \notag\\ 
 & \le& {{\left\| \phi  \right\|}_{L_{loc}^{\infty}}}{{\left\| D\phi  \right\|}_{L_{loc}^{2}}} \notag\\
  & \le& E_0^{1/2}{{\left\| \phi  \right\|}_{L_{loc}^{\infty}}} \notag\\
  & \le& \tilde{C}{{\left\| \phi  \right\|}_{L_{loc}^{\infty}}}
\end{eqnarray}
Then, using the same procedure for the scalar field, we arrive at
\begin{eqnarray}
    {{\left\| \nabla \nabla \phi \right\|}_{L_{loc}^{2}}}&\le& {{\left\| \nabla\phi A  \right\|}_{L_{loc}^{2}}}+{{\left\| \phi \nabla A  \right\|}_{L_{loc}^{2}}}+{{\left\|A D\phi    \right\|}_{L_{loc}^{2}}}\notag\\ 
 & \le& {{\left\| \nabla\phi  \right\|}_{L_{loc}^{2}}}{{\left\|  A  \right\|}_{L_{loc}^{\infty}}}+{{\left\| \phi  \right\|}_{L_{loc}^{\infty}}}{{\left\| \nabla A  \right\|}_{L_{loc}^{2}}}+{{\left\|A    \right\|}_{L_{loc}^{\infty}}}{{\left\| D\phi    \right\|}_{L_{loc}^{2}}}\notag\\
  & \le& \tilde{C}\left({{\left\|  A  \right\|}_{L_{loc}^{\infty}}}+{{\left\| \phi  \right\|}_{L_{loc}^{\infty}}}\right)
\end{eqnarray}
\end{proof}


\begin{lemma}\label{lemmaderif3Aphi}
Let ${{\left\| u \right\|}_{L_{loc}^{p}}}$ be as specified in Lemma \ref{lemmaL2Aphi} and $\Tilde{C}$ be a function solely determined by the initial data $E_0$. For $F$ and $\phi$ satisfying \eqref{eom1} and \eqref{eom2}, the following inequalities are satisfied:
 \begin{eqnarray}\label{estiderif3AL2}
   {{\left\| \nabla \nabla \nabla A \right\|}_{L_{loc}^{2}}}\le \tilde{C}{\left( 1+t \right)}Y~,
\end{eqnarray}
\begin{eqnarray}\label{estiderif3phiL2}
  {{\left\| \nabla\nabla\nabla \phi  \right\|}_{L_{loc}^{2}}}\le \tilde{C}\left( 1+t \right)Z~,
\end{eqnarray}
   \begin{eqnarray}\label{estiderif4AL2}
   {{\left\| \nabla\nabla \nabla \nabla A \right\|}_{L_{loc}^{2}}}\le \tilde{C}{\left( 1+t \right)}X~,
\end{eqnarray}
\begin{eqnarray}\label{estiderif4phiL2}
  {{\left\| \nabla\nabla\nabla\nabla \phi  \right\|}_{L_{loc}^{2}}}\le \tilde{C}\left( 1+t \right)W~.
\end{eqnarray}
where
\begin{eqnarray}\label{Y}
   Y&=&\left( {{\left\| A \right\|}_{L_{loc}^{\infty }}}+{{\left\| \phi  \right\|}_{L_{loc}^{\infty }}} \right)\left( {{\left\| A \right\|}_{L_{loc}^{\infty }}}+{{\left\| \phi  \right\|}_{L_{loc}^{\infty }}}+{{\left( {{\left\| A \right\|}_{L_{loc}^{\infty }}}+{{\left\| \phi  \right\|}_{L_{loc}^{\infty }}} \right)}^{3/4}}+\left\| \phi  \right\|_{L_{loc}^{\infty }}^{3/2} \right) \notag\\ 
 &&+\left\| \phi  \right\|_{L_{loc}^{\infty }}^{2}+{{\left\| \phi  \right\|}_{L_{loc}^{\infty }}}{{\left\| A \right\|}_{L_{loc}^{\infty }}}+{{\left\| \phi  \right\|}_{L_{loc}^{\infty }}}\left( {{\left\| A \right\|}_{L_{loc}^{\infty }}}+{{\left\| \phi  \right\|}_{L_{loc}^{\infty }}} \right) ~,  
\end{eqnarray}
\begin{eqnarray}
   Z&=&{{\left( {{\left\| A \right\|}_{L_{loc}^{\infty }}}+{{\left\| \phi  \right\|}_{L_{loc}^{\infty }}} \right)}^{3/4}}\left\| \phi  \right\|_{L_{loc}^{\infty }}^{3/4}+{{\left\| A \right\|}_{L_{loc}^{\infty }}}+{{\left\| \phi  \right\|}_{L_{loc}^{\infty }}} \notag\\ 
 && +\left\| \phi  \right\|_{L_{loc}^{\infty }}^{3/4}{{\left\| A \right\|}_{L_{loc}^{\infty }}}+\left\| \phi  \right\|_{L_{loc}^{\infty }}^{3/4}{{\left\| \phi  \right\|}_{L_{loc}^{\infty }}}+\left\| \phi  \right\|_{L_{loc}^{\infty }}^{3/4}{{\left( {{\left\| A \right\|}_{L_{loc}^{\infty }}}+{{\left\| \phi  \right\|}_{L_{loc}^{\infty }}} \right)}^{3/4}} \notag\\ 
 && +\left\| \phi  \right\|_{L_{loc}^{\infty }}^{9/4}+\left\| \phi  \right\|_{L_{loc}^{\infty }}^{2}+\left\| A \right\|_{L_{loc}^{\infty }}^{2}+{{\left\| \phi  \right\|}_{L_{loc}^{\infty }}}{{\left\| A \right\|}_{L_{loc}^{\infty }}} \notag\\ 
 && +{{\left\| A \right\|}_{L_{loc}^{\infty }}}{{\left( {{\left\| A \right\|}_{L_{loc}^{\infty }}}+{{\left\| \phi  \right\|}_{L_{loc}^{\infty }}} \right)}^{3/4}}+{{\left\| A \right\|}_{L_{loc}^{\infty }}}\left\| \phi  \right\|_{L_{loc}^{\infty }}^{3/2}~,   
\end{eqnarray}
\begin{eqnarray}\label{X}
   X&=&Z\left\{ \left( {{\left\| A \right\|}_{L_{loc}^{\infty }}}+{{\left\| \phi  \right\|}_{L_{loc}^{\infty }}} \right)+{{\left( {{\left\| A \right\|}_{L_{loc}^{\infty }}}+{{\left\| \phi  \right\|}_{L_{loc}^{\infty }}} \right)}^{3/2}} \right. \notag\\ 
 && +{{\left( {{\left\| A \right\|}_{L_{loc}^{\infty }}}+{{\left\| \phi  \right\|}_{L_{loc}^{\infty }}} \right)}^{1/4}}+{{\left\| \phi  \right\|}_{L_{loc}^{\infty }}}+{{\left\| \phi  \right\|}_{L_{loc}^{\infty }}}{{\left\| A \right\|}_{L_{loc}^{\infty }}} \notag\\ 
 && \left. +{{\left( {{\left\| A \right\|}_{L_{loc}^{\infty }}}+{{\left\| \phi  \right\|}_{L_{loc}^{\infty }}} \right)}^{3/4}}\left\| \phi  \right\|_{L_{loc}^{2}}^{2} \right\} \notag\\ 
 && +{{\left\| \phi  \right\|}_{L_{loc}^{\infty }}}{{\left\| A \right\|}_{L_{loc}^{\infty }}}\left( {{\left\| A \right\|}_{L_{loc}^{\infty }}}+{{\left\| \phi  \right\|}_{L_{loc}^{\infty }}} \right)+{{\left( {{\left\| A \right\|}_{L_{loc}^{\infty }}}+{{\left\| \phi  \right\|}_{L_{loc}^{\infty }}} \right)}^{3/4}} \notag\\ 
 && +{{\left( {{\left\| A \right\|}_{L_{loc}^{\infty }}}+{{\left\| \phi  \right\|}_{L_{loc}^{\infty }}} \right)}^{3/2}}{{\left\| A \right\|}_{L_{loc}^{\infty }}}~,  
\end{eqnarray}
\begin{eqnarray}
   W&=&{{Z}^{3/4}}{{\left( {{\left\| A \right\|}_{L_{loc}^{\infty }}}+{{\left\| \phi  \right\|}_{L_{loc}^{\infty }}} \right)}^{1/4}}\left\| \phi  \right\|_{L_{loc}^{\infty }}^{3/4}+{{Y}^{3/4}}\left\| \phi  \right\|_{L_{loc}^{\infty }}^{1/4}{{\left( {{\left\| A \right\|}_{L_{loc}^{\infty }}}+{{\left\| \phi  \right\|}_{L_{loc}^{\infty }}} \right)}^{3/4}}\notag\\ 
 && +Z{{\left\| A \right\|}_{L_{loc}^{\infty }}}+\left( 1+t \right)Y{{\left\| \phi  \right\|}_{L_{loc}^{\infty }}}+Z\left\| \phi  \right\|_{L_{loc}^{\infty }}^{3/4}{{\left( {{\left\| A \right\|}_{L_{loc}^{\infty }}}+{{\left\| \phi  \right\|}_{L_{loc}^{\infty }}} \right)}^{1/4}} \notag\\ 
 &&  +{{\left\| \phi  \right\|}_{L_{loc}^{\infty }}}\left\| \phi  \right\|_{L_{loc}^{\infty }}^{3/2}+\left\| \phi  \right\|_{L_{loc}^{\infty }}^{3/4}{{\left\| A \right\|}_{L_{loc}^{\infty }}}{{\left( {{\left\| A \right\|}_{L_{loc}^{\infty }}}+{{\left\| \phi  \right\|}_{L_{loc}^{\infty }}} \right)}^{3/4}} \notag\\ 
 & & +{{\left\| A \right\|}_{L_{loc}^{\infty }}}Z+{{\left\| A \right\|}_{L_{loc}^{\infty }}}\left\| \phi  \right\|_{L_{loc}^{\infty }}^{2}+\left\| A \right\|_{L_{loc}^{\infty }}^{2}\left( {{\left\| A \right\|}_{L_{loc}^{\infty }}}+{{\left\| \phi  \right\|}_{L_{loc}^{\infty }}} \right) \notag\\ 
 & & +\left\| \phi  \right\|_{L_{loc}^{\infty }}^{3/4}{{\left\| A \right\|}_{L_{loc}^{\infty }}}{{\left( {{\left\| A \right\|}_{L_{loc}^{\infty }}}+{{\left\| \phi  \right\|}_{L_{loc}^{\infty }}} \right)}^{1/4}} ~. 
\end{eqnarray}
\end{lemma}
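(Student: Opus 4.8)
The plan is to obtain each of the four bounds by differentiating the equations of motion \eqref{eom1}--\eqref{eom2} one order beyond what was treated in Lemma \ref{lemmaderif1Aphi}, isolating the top-order derivative and estimating the resulting source term in $L^2_{loc}$, exactly as in that lemma but iterated. The two schematic relations I would rely on are: differentiating the Maxwell equation \eqref{eom1} controls $\nabla^{k+2}A$ by $\nabla^{k}(\phi\, D\phi)$, while differentiating the gauged scalar equation \eqref{eom2}, after expanding $D^{\mu}D_{\mu}\phi=\nabla^{\mu}\nabla_{\mu}\phi-2iA^{\mu}\nabla_{\mu}\phi-i(\nabla^{\mu}A_{\mu})\phi-A_{\mu}A^{\mu}\phi$, controls $\nabla^{k+2}\phi$ by products of $A$, $\phi$ and their derivatives up to order $k+1$. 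Applying $\nabla$ once to these identities yields the third-order estimates \eqref{estiderif3AL2}--\eqref{estiderif3phiL2}, and a second application yields the fourth-order estimates \eqref{estiderif4AL2}--\eqref{estiderif4phiL2}.

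For each source term I would split the product with H\"older's inequality, placing the highest-regularity factor in $L^2_{loc}$ and any undifferentiated occurrence of $A$ or $\phi$ in $L^\infty_{loc}$. The intermediate factors, namely the lower-order derivatives $\nabla\phi$, $\nabla^2\phi$, $\nabla^2 A$ and so on measured in $L^4_{loc}$ or other intermediate exponents, I would recast using Gagliardo-Nirenberg interpolation inequalities on the local region $\Omega_{loc}$; this is precisely the mechanism that produces the fractional exponents $3/4$, $1/4$, $3/2$ and $9/4$ appearing in $Y$, $Z$, $X$ and $W$. The lower-order $L^2_{loc}$ norms left behind are controlled either by the conserved energy $E_0$ of Lemma \ref{lema2} or by the already-established bounds of Lemmas \ref{lemmaL2Aphi} and \ref{lemmaderif1Aphi}; in particular each undifferentiated $\|A\|_{L^2_{loc}}$ or $\|\phi\|_{L^2_{loc}}$ contributes a factor $(1+t)$ through \eqref{estiphiL2}--\eqref{estiAL2}, which is the origin of the explicit $(1+t)$ prefactors in all four inequalities.

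The estimates are carried out in the triangular order $\nabla^3 A\to\nabla^3\phi\to\nabla^4 A\to\nabla^4\phi$. Because the differentiated Maxwell and scalar equations are of one order lower in the field whose top derivative is being bounded, the norm one is estimating never reappears on the right-hand side: the $\nabla^{k+2}A$ source contains $A$ only through $\nabla^{k}A$, and the $\nabla^{k+2}\phi$ source contains $A$ and $\phi$ only through order $k+1$. Hence no absorption argument is needed, and each bound closes on strictly lower-order data, which is why \eqref{estiderif4AL2} carries the third-order coefficient $Z$ inside $X$ while \eqref{estiderif4phiL2} carries both $Y$ and $Z$ inside $W$, as displayed in \eqref{X}. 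I expect the main obstacle to be the interpolation bookkeeping rather than any analytic difficulty: for each of the many product terms one must decide which factor goes into $L^2_{loc}$, assign the correct Gagliardo-Nirenberg exponent to each intermediate derivative factor, and then verify that the resulting monomials in $\|A\|_{L^\infty_{loc}}$ and $\|\phi\|_{L^\infty_{loc}}$ assemble into exactly the polynomials $Y$, $Z$, $X$ and $W$ stated above. Once the splittings are fixed, the substitution of Lemmas \ref{lemmaL2Aphi} and \ref{lemmaderif1Aphi} and the collection of terms are routine.
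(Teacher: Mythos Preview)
Your proposal is correct and follows essentially the same route as the paper: differentiate the equations of motion \eqref{eom1}--\eqref{eom2} to express $\nabla^{k+2}A$ and $\nabla^{k+2}\phi$ as sums of lower-order products, split each product with H\"older (placing undifferentiated factors in $L^\infty_{loc}$ and top-order factors in $L^2_{loc}$), handle the intermediate factors in $L^4_{loc}$ via Gagliardo--Nirenberg interpolation (producing the $3/4$, $1/4$, $3/2$, $9/4$ exponents), and feed in the bounds from Lemmas \ref{lemmaL2Aphi} and \ref{lemmaderif1Aphi}. The triangular order $\nabla^3A\to\nabla^3\phi\to\nabla^4A\to\nabla^4\phi$ and your observation that the estimated quantity never reappears on the right are exactly how the paper proceeds, so the remaining work is indeed the interpolation bookkeeping you describe.
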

\begin{proof}
To prove \eqref{estiderif3AL2}, we derive \eqref{eom1} and using \eqref{estiderif1phiL2}-\eqref{estiderif2phiL2} also the Minkowski inequality and Gagliardo-Nirenberg-Sobolev inequality, we have
\begin{eqnarray}
  {{\left\| \nabla \nabla \nabla A \right\|}_{L_{loc}^{2}}}&\le& {{\left\| \nabla \phi D\phi  \right\|}_{L_{loc}^{2}}}+{{\left\| \phi \nabla \nabla \phi  \right\|}_{L_{loc}^{2}}}+{{\left\| {{\phi }^{2}}\nabla A \right\|}_{L_{loc}^{2}}}+{{\left\| \phi A\nabla \phi  \right\|}_{L_{loc}^{2}}} \notag\\ 
 & \le& {{\left\| \nabla \phi  \right\|}_{L_{loc}^{4}}}{{\left\| D\phi  \right\|}_{L_{loc}^{4}}}+{{\left\| \phi  \right\|}_{L_{loc}^{\infty }}}{{\left\| \nabla \nabla \phi  \right\|}_{L_{loc}^{2}}} \notag\\ 
 && +\left\| \phi  \right\|_{L_{loc}^{\infty }}^{2}{{\left\| \nabla A \right\|}_{L_{loc}^{2}}}+{{\left\| \phi  \right\|}_{L_{loc}^{\infty }}}{{\left\| A \right\|}_{L_{loc}^{\infty }}}{{\left\| \nabla \phi  \right\|}_{L_{loc}^{2}}} \notag\\ 
 & \le& \tilde{C}{{\left( 1+t \right)}^{1/4}}\left\{ \left( {{\left\| A \right\|}_{L_{loc}^{\infty }}}+{{\left\| \phi  \right\|}_{L_{loc}^{\infty }}}+{{\left( {{\left\| A \right\|}_{L_{loc}^{\infty }}}+{{\left\| \phi  \right\|}_{L_{loc}^{\infty }}} \right)}^{3/4}}+\left\| \phi  \right\|_{L_{loc}^{\infty }}^{3/2} \right) \right. \notag\\ 
 && \cdot \left( {{\left\| A \right\|}_{L_{loc}^{\infty }}}+{{\left\| \phi  \right\|}_{L_{loc}^{\infty }}} \right)+\left\| \phi  \right\|_{L_{loc}^{\infty }}^{2}+{{\left\| \phi  \right\|}_{L_{loc}^{\infty }}}{{\left\| A \right\|}_{L_{loc}^{\infty }}} \notag\\ 
 & &+\left. {{\left\| \phi  \right\|}_{L_{loc}^{\infty }}}\left( {{\left\| A \right\|}_{L_{loc}^{\infty }}}+{{\left\| \phi  \right\|}_{L_{loc}^{\infty }}} \right) \right\}  
\end{eqnarray}
By defining $Y$ as in \eqref{Y}, we arrive at equation \eqref{estiderif3AL2}.
To prove \eqref{estiderif3phiL2}, we employ the same procedure. Hence, we have
\begin{eqnarray}
  {{\left\| \nabla \nabla \nabla \phi  \right\|}_{L_{loc}^{2}}}&\le& {{\left\| \nabla \phi \nabla A \right\|}_{L_{loc}^{2}}}+{{\left\| \nabla \nabla \phi A \right\|}_{L_{loc}^{2}}}+{{\left\| \phi \nabla \nabla A \right\|}_{L_{loc}^{2}}}+{{\left\| \nabla AD\phi  \right\|}_{L_{loc}^{2}}}+{{\left\| A\nabla D\phi  \right\|}_{L_{loc}^{2}}} \notag\\ 
 & \le& {{\left\| \nabla \phi  \right\|}_{L_{loc}^{4}}}{{\left\| \nabla A \right\|}_{L_{loc}^{4}}}+{{\left\| \nabla \nabla \phi  \right\|}_{L_{loc}^{2}}}{{\left\| A \right\|}_{L_{loc}^{\infty }}}+{{\left\| \nabla \nabla A \right\|}_{L_{loc}^{2}}}{{\left\| \phi  \right\|}_{L_{loc}^{\infty }}}  \notag\\ 
 && +{{\left\| \nabla A \right\|}_{L_{loc}^{4}}}{{\left\| D\phi  \right\|}_{L_{loc}^{4}}}+{{\left\| A \right\|}_{L_{loc}^{\infty }}}{{\left\| \nabla D\phi  \right\|}_{L_{loc}^{2}}}  \notag\\ 
 & \le& \left\| \nabla \nabla \phi  \right\|_{L_{loc}^{2}}^{3/4}\left\| \nabla \phi  \right\|_{L_{loc}^{2}}^{1/4}\left\| \nabla \nabla A \right\|_{L_{loc}^{2}}^{3/4}\left\| \nabla A \right\|_{L_{loc}^{2}}^{1/4}+{{\left\| \nabla \nabla \phi  \right\|}_{L_{loc}^{2}}}{{\left\| A \right\|}_{L_{loc}^{\infty }}}  \notag\\ 
 & &+\left\| \nabla \nabla A \right\|_{L_{loc}^{2}}^{3/4}\left\| \nabla A \right\|_{L_{loc}^{2}}^{1/4}\left( {{\left\| \nabla \nabla \phi  \right\|}_{L_{loc}^{2}}}+{{\left\| \nabla \phi A \right\|}_{L_{loc}^{2}}}+{{\left\| \phi \nabla A \right\|}_{L_{loc}^{2}}} \right) \notag \\ 
 & &+{{\left\| \nabla \nabla A \right\|}_{L_{loc}^{2}}}{{\left\| \phi  \right\|}_{L_{loc}^{\infty }}}+{{\left\| A \right\|}_{L_{loc}^{\infty }}}\left( {{\left\| \nabla \nabla \phi  \right\|}_{L_{loc}^{2}}}+{{\left\| \nabla A\phi  \right\|}_{L_{loc}^{2}}}+{{\left\| A\nabla \phi  \right\|}_{L_{loc}^{2}}} \right)  \notag\\ 
 & \le& \tilde{C}\left( 1+t \right)\left\{ {{\left( {{\left\| A \right\|}_{L_{loc}^{\infty }}}+{{\left\| \phi  \right\|}_{L_{loc}^{\infty }}} \right)}^{3/4}}\left\| \phi  \right\|_{L_{loc}^{\infty }}^{3/4}+{{\left\| A \right\|}_{L_{loc}^{\infty }}}+{{\left\| \phi  \right\|}_{L_{loc}^{\infty }}} \right.  \notag\\ 
 && +\left\| \phi  \right\|_{L_{loc}^{\infty }}^{3/4}{{\left\| A \right\|}_{L_{loc}^{\infty }}}+\left\| \phi  \right\|_{L_{loc}^{\infty }}^{3/4}{{\left\| \phi  \right\|}_{L_{loc}^{\infty }}}+\left\| \phi  \right\|_{L_{loc}^{\infty }}^{3/4}{{\left( {{\left\| A \right\|}_{L_{loc}^{\infty }}}+{{\left\| \phi  \right\|}_{L_{loc}^{\infty }}} \right)}^{3/4}}  \notag\\ 
 && +\left\| \phi  \right\|_{L_{loc}^{\infty }}^{9/4}+\left\| \phi  \right\|_{L_{loc}^{\infty }}^{2}+\left\| A \right\|_{L_{loc}^{\infty }}^{2}+{{\left\| \phi  \right\|}_{L_{loc}^{\infty }}}{{\left\| A \right\|}_{L_{loc}^{\infty }}} \notag\\ 
 &&+ \left. {{\left\| A \right\|}_{L_{loc}^{\infty }}}{{\left( {{\left\| A \right\|}_{L_{loc}^{\infty }}}+{{\left\| \phi  \right\|}_{L_{loc}^{\infty }}} \right)}^{3/4}}+{{\left\| A \right\|}_{L_{loc}^{\infty }}}\left\| \phi  \right\|_{L_{loc}^{\infty }}^{3/2} \right\}  
\end{eqnarray}
Next, to prove \eqref{estiderif4AL2}, we apply a similar approach as the one used in the proof of the preceding lemma.
    \begin{eqnarray}
  {{\left\| \nabla \nabla \nabla \nabla A \right\|}_{L_{loc}^{2}}}&\le& {{\left\| \nabla \nabla \phi D\phi  \right\|}_{L_{loc}^{2}}}+{{\left\| \nabla \phi \nabla D\phi  \right\|}_{L_{loc}^{2}}}+{{\left\| \nabla \phi \nabla \nabla \phi  \right\|}_{L_{loc}^{2}}} \notag\\ 
 && +{{\left\| \phi \nabla \nabla \nabla \phi  \right\|}_{L_{loc}^{2}}}+{{\left\| \nabla {{\phi }^{2}}\nabla A \right\|}_{L_{loc}^{2}}}+{{\left\| {{\phi }^{2}}\nabla \nabla A \right\|}_{L_{loc}^{2}}} \notag\\ 
 && +{{\left\| \nabla \phi A\nabla \phi  \right\|}_{L_{loc}^{2}}}+{{\left\| \phi \nabla A\nabla \phi  \right\|}_{L_{loc}^{2}}}+{{\left\| \phi A\nabla \nabla \phi  \right\|}_{L_{loc}^{2}}} \\ 
 & \le& \left\| \nabla \nabla \nabla \phi  \right\|_{L_{loc}^{2}}^{3/4}\left\| \nabla \nabla \phi  \right\|_{L_{loc}^{2}}^{1/4}\left\| \nabla D\phi  \right\|_{L_{loc}^{2}}^{3/4}\left\| D\phi  \right\|_{L_{loc}^{2}}^{1/4} \notag\\ 
 & &+\left\| \nabla \nabla \phi  \right\|_{L_{loc}^{2}}^{3/4}\left\| \nabla \phi  \right\|_{L_{loc}^{2}}^{1/4}\left\| \nabla \nabla D\phi  \right\|_{L_{loc}^{2}}^{3/4}\left\| \nabla D\phi  \right\|_{L_{loc}^{2}}^{1/4} \notag\\ 
 & &+\left\| \nabla \nabla \nabla \phi  \right\|_{L_{loc}^{2}}^{3/4}\left\| \nabla \nabla \phi  \right\|_{L_{loc}^{2}}^{1/4}\left\| \nabla \phi  \right\|_{L_{loc}^{2}}^{3/4}\left\| \nabla \phi  \right\|_{L_{loc}^{2}}^{1/4} \notag\\ 
 & &+{{\left\| \nabla \nabla \nabla \phi  \right\|}_{L_{loc}^{2}}}{{\left\| \phi  \right\|}_{L_{loc}^{\infty }}}+{{\left\| \phi  \right\|}_{L_{loc}^{\infty }}}{{\left\| A \right\|}_{L_{loc}^{\infty }}}{{\left\| \nabla \phi  \right\|}_{L_{loc}^{2}}} \notag\\ 
 & &+\left\| \nabla \phi  \right\|_{L_{loc}^{2}}^{3/4}\left\| \nabla \phi  \right\|_{L_{loc}^{2}}^{1/4}\left\| \nabla \phi  \right\|_{L_{loc}^{2}}^{3/4}\left\| \phi  \right\|_{L_{loc}^{2}}^{1/4}{{\left\| \nabla A \right\|}_{L_{loc}^{2}}} \notag\\ 
 & &+\left\| \nabla \nabla \phi  \right\|_{L_{loc}^{2}}^{3/4}\left\| \nabla \phi  \right\|_{L_{loc}^{2}}^{1/4}\left\| \nabla \nabla \nabla A \right\|_{L_{loc}^{2}}^{3/4}\left\| \nabla \nabla A \right\|_{L_{loc}^{2}}^{1/4}{{\left\| \phi  \right\|}_{L_{loc}^{\infty }}} \notag\\ 
 & &+\left\| \nabla \nabla \phi  \right\|_{L_{loc}^{2}}^{3/2}\left\| \nabla \phi  \right\|_{L_{loc}^{2}}^{1/2}{{\left\| A \right\|}_{L_{loc}^{\infty }}} \notag\\ 
 & &+{{\left\| \phi  \right\|}_{L_{loc}^{\infty }}}{{\left\| A \right\|}_{L_{loc}^{\infty }}}{{\left\| \nabla \nabla \phi  \right\|}_{L_{loc}^{2}}}  
    \end{eqnarray}
By using \eqref{estiderif1phiL2}-\eqref{estiderif3phiL2}, we can have \eqref{estiderif4AL2}. The same method also apply to prove \eqref{estiderif4phiL2},
\begin{eqnarray}
   {{\left\| \nabla \nabla \nabla \nabla \phi  \right\|}_{L_{loc}^{2}}}&\le& {{\left\| \nabla \nabla \phi \nabla A \right\|}_{L_{loc}^{2}}}+{{\left\| \nabla \phi \nabla \nabla A \right\|}_{L_{loc}^{2}}}+{{\left\| \nabla \nabla \nabla \phi A \right\|}_{L_{loc}^{2}}} \notag\\ 
 && +{{\left\| \phi \nabla \nabla \nabla A \right\|}_{L_{loc}^{2}}}+{{\left\| \nabla \nabla AD\phi  \right\|}_{L_{loc}^{2}}}+{{\left\| \nabla A\nabla D\phi  \right\|}_{L_{loc}^{2}}} \notag\\ 
 && +{{\left\| A\nabla \nabla D\phi  \right\|}_{L_{loc}^{2}}} \notag\\ 
 & \le& \left\| \nabla \nabla \nabla \phi  \right\|_{L_{loc}^{2}}^{3/4}\left\| \nabla \nabla \phi  \right\|_{L_{loc}^{2}}^{1/4}\left\| \nabla \nabla A \right\|_{L_{loc}^{2}}^{3/4}\left\| \nabla A \right\|_{L_{loc}^{2}}^{1/4} \notag\\ 
 & &+\left\| \nabla \nabla \nabla A \right\|_{L_{loc}^{2}}^{3/4}\left\| \nabla \nabla A \right\|_{L_{loc}^{2}}^{1/4}\left\| \nabla \nabla \phi  \right\|_{L_{loc}^{2}}^{3/4}\left\| \nabla \phi  \right\|_{L_{loc}^{2}}^{1/4} \notag\\ 
 & &+{{\left\| \nabla \nabla \nabla \phi  \right\|}_{L_{loc}^{2}}}{{\left\| A \right\|}_{L_{loc}^{\infty }}}+{{\left\| \nabla \nabla \nabla A \right\|}_{L_{loc}^{2}}}{{\left\| \phi  \right\|}_{L_{loc}^{\infty }}} \notag\\ 
 & &+\left\| \nabla \nabla \nabla A \right\|_{L_{loc}^{2}}^{3/4}\left\| \nabla \nabla A \right\|_{L_{loc}^{2}}^{1/4}\left\| D\phi  \right\|_{L_{loc}^{2}}^{1/4} \notag\\ 
 & &\cdot {{\left( {{\left\| \nabla \nabla \phi  \right\|}_{L_{loc}^{2}}}+{{\left\| \nabla A\phi  \right\|}_{L_{loc}^{2}}}+{{\left\| A\nabla \phi  \right\|}_{L_{loc}^{2}}} \right)}^{3/4}} \notag\\ 
 & &+\left\| \nabla \nabla A \right\|_{L_{loc}^{2}}^{3/4}\left\| \nabla A \right\|_{L_{loc}^{2}}^{1/4}\left( \left\| \nabla \nabla \nabla \phi  \right\|_{L_{loc}^{2}}^{3/4}\left\| \nabla \nabla \phi  \right\|_{L_{loc}^{2}}^{1/4} \right. \notag\\ 
 & &\left. +{{\left\| \phi  \right\|}_{L_{loc}^{\infty }}}\left\| \nabla \nabla A \right\|_{L_{loc}^{2}}^{3/4}\left\| \nabla A \right\|_{L_{loc}^{2}}^{1/4}+{{\left\| A \right\|}_{L_{loc}^{\infty }}}\left\| \nabla \nabla \phi  \right\|_{L_{loc}^{2}}^{3/4}\left\| \nabla \phi  \right\|_{L_{loc}^{2}}^{1/4} \right) \notag\\ 
 & &+{{\left\| A \right\|}_{L_{loc}^{\infty }}}\left( {{\left\| \nabla \nabla \nabla \phi  \right\|}_{L_{loc}^{2}}}+{{\left\| \nabla \nabla A \right\|}_{L_{loc}^{2}}}{{\left\| \phi  \right\|}_{L_{loc}^{\infty }}}+{{\left\| A \right\|}_{L_{loc}^{\infty }}}{{\left\| \nabla \nabla \phi  \right\|}_{L_{loc}^{2}}} \right. \notag\\ 
 & &+\left. {{\left\| \nabla A\nabla \phi  \right\|}_{L_{loc}^{2}}}\left\| \nabla \nabla A \right\|_{L_{loc}^{2}}^{3/4}\left\| \nabla A \right\|_{L_{loc}^{2}}^{1/4}\left\| \nabla \nabla \phi  \right\|_{L_{loc}^{2}}^{3/4}\left\| \nabla \phi  \right\|_{L_{loc}^{2}}^{1/4} \right)  
\end{eqnarray}
By using \eqref{estiderif1phiL2}-\eqref{estiderif3phiL2}, after some computation, we can prove \eqref{estiderif4AL2}.
\end{proof}

Therefore, by utilizing the estimates for each term in equations \eqref{AH4} and \eqref{phiH4} derived earlier, we arrive at the following result
\begin{eqnarray}\label{phiLinfiny}
    {{\left\| \phi  \right\|}_{L_{loc}^{\infty }}}\le \tilde{C}\left( 1+t \right)\left\{ 1+\left( {{\left\| A \right\|}_{L_{loc}^{\infty }}}+{{\left\| \phi  \right\|}_{L_{loc}^{\infty }}} \right)+Z+W \right\}
\end{eqnarray}
\begin{eqnarray}\label{ALinfiny}
    {{\left\| A \right\|}_{L_{loc}^{\infty }}}\le \tilde{C}\left( 1+t \right)\left\{ 1+{{\left\| \phi  \right\|}_{L_{loc}^{\infty }}}+Y+X \right\}
\end{eqnarray}
Alternatively, this allows us to get \eqref{phiinfty} and \eqref{Ainfty}. This completes the proof of preposition \ref{statprep1}.
\end{proof}

}
\section{The Uniform Bound on The Energy}
\label{sec:main}
Our main result is summarised in the following theorem:
\begin{theorem}\label{statprep1}
	Let $u$ be a smooth solution of \eqref{boxtilde_u} and ${E}_{\mathcal{C}}(t)$ denote the conformal energy as defined in \eqref{energyEC}. There exist finite constant $\bar{C}$ such that for a finite time interval $t\ge 0$, the following inequality holds
	\begin{eqnarray}\label{prep1}
	{{E}_{\mathcal{C}}}(t) \le \bar{C}(1+t)^2\left( {{E}_{\mathcal{C}}}(0)+E\left[ \slashed{\Delta^2 }u \right]\left( 0 \right)+E[u](0) \right)~.
	\end{eqnarray}
where $E(0)$ denotes the initial energy of the system.
\end{theorem}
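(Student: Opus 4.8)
The plan is to run a conformal (Morawetz) multiplier argument directly on the inhomogeneous equation \eqref{boxtilde_u}. Multiplying $\tilde{\Box}u=\rho$ by the multiplier $Ku=(t^{2}+r^{*2})\partial_{t}u+2tr^{*}\partial_{r^{*}}u$ associated with the vector field \eqref{K}, integrating over the slab $\bigcup_{0\le s\le t}\Sigma_{s}$, and integrating by parts in $r^{*}$ and over $\mathcal{S}^{2}$, one obtains an identity of the form
\begin{eqnarray}\label{plan-identity}
E_{\mathcal{C}}(t)=E_{\mathcal{C}}(0)+\int_{0}^{t}\!\int_{\Sigma_{s}}(Ku)\,\rho\,dr^{*}d^{2}\omega\,ds+\int_{0}^{t}\mathcal{R}(s)\,ds,
\end{eqnarray}
where $\mathcal{R}(s)$ collects the bulk terms arising because $K$ is not exactly conformal for $\tilde{\Box}$, namely those produced by the $r^{*}$-dependence of the potential $V=f/r^{2}$ and by the first-order piece of $V\slashed{\Delta}$. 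The useful algebraic fact, obtained by completing the square in the density \eqref{energyEC}, is that $e_{\mathcal{C}}$ is a sum of the squares of the ``good'' derivatives $(t\partial_{t}+r^{*}\partial_{r^{*}})u$ and $(r^{*}\partial_{t}+t\partial_{r^{*}})u$ (plus the weighted angular and scalar terms); this gives coercivity of $E_{\mathcal{C}}$ and, crucially, the pointwise bound $|Ku|^{2}\le 2\,(t^{2}+r^{*2})\,e_{\mathcal{C}}$. Each term of $\mathcal{R}(s)$ carries at most one weight factor $(t^{2}+r^{*2})^{1/2}$ times a derivative of $V$ (bounded and integrable in $r^{*}$), so it is lower order and is bounded by $C\,E_{\mathcal{C}}(s)+C\,E[u](s)$; by Lemma \ref{lema2} the second term is the conserved datum $E[u](0)=E_{0}$, contributing only to the final constant.

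The heart of the matter is the source coupling. Using the pointwise bound above in its weighted Cauchy--Schwarz form,
\begin{eqnarray}\label{plan-CS}
\left|\int_{\Sigma_{s}}(Ku)\,\rho\,dr^{*}d^{2}\omega\right|\le\big(2E_{\mathcal{C}}(s)\big)^{1/2}\,\Big\|(t^{2}+r^{*2})^{1/2}\rho\Big\|_{L^{2}(\Sigma_{s})},
\end{eqnarray}
reduces everything to the weighted $L^{2}$ norm of $\rho$. Since $\rho$ is one angular/null derivative ($M_{1}$, $L$) of the quadratic current $D_{\gamma}\phi\,\overline{\phi}-\phi\,\overline{D_{\gamma}\phi}$, the product rule and H\"older give schematically
\begin{eqnarray}\label{plan-rho}
\|\rho\|_{L^{2}(\Sigma_{s})}\le C\Big(\|\phi\|_{L^{\infty}_{loc}}\|\nabla D\phi\|_{L^{2}}+\|\nabla\phi\|_{L^{4}}\|D\phi\|_{L^{4}}+\|A\|_{L^{\infty}_{loc}}\|\phi\|_{L^{\infty}_{loc}}\|D\phi\|_{L^{2}}\Big),
\end{eqnarray}
and the $L^{\infty}_{loc}$ factors are controlled by $\tilde{C}(1+s)$ through \eqref{phiinfty} and \eqref{Ainfty}, while the $L^{2}$/$L^{4}$ factors are controlled by the conserved energy $E_{0}$.

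The angular derivatives forced by the operator $M_{1}$ in $\rho$ are handled by commuting \eqref{boxtilde_u} with the spherical Laplacian, which is licit because $V=V(r)$ commutes with $\slashed{\Delta}$, so $\tilde{\Box}(\slashed{\Delta}^{k}u)=\slashed{\Delta}^{k}\rho$ solves the same type of equation. The Sobolev embedding on $\mathcal{S}^{2}$ (where $H^{s}$ with $s>1$ controls pointwise norms, and one further derivative of $\rho$ is present) then demands exactly the higher angular energy $E[\slashed{\Delta}^{2}u]$ that appears on the right-hand side of \eqref{prep1}; since $\slashed{\Delta}^{2}u$ obeys the same evolution, its energy is governed by the conserved datum $E[\slashed{\Delta}^{2}u](0)$. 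On the local support $\tfrac12 s\le|r^{*}|\le\tfrac34 s$ one has $(t^{2}+r^{*2})^{1/2}\le C(1+s)$, so the right factor in \eqref{plan-CS} is controlled by a power of $(1+s)$ times a constant depending only on $E_{0}$, $E_{\mathcal{C}}(0)$ and $E[\slashed{\Delta}^{2}u](0)$.

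Finally I would close by a Gr\"onwall step: inserting the bounds for $\mathcal{R}$ and for the source coupling into the differentiated form of \eqref{plan-identity} and dividing by $E_{\mathcal{C}}(s)^{1/2}$ gives a differential inequality whose integration over $[0,t]$ produces a polynomial-in-$t$ bound by the conserved data, from which one reads off \eqref{prep1}. I expect the main obstacle to be precisely forcing this polynomial to be quadratic: a crude insertion of the linearly growing $L^{\infty}_{loc}$ bounds \eqref{phiinfty}--\eqref{Ainfty} into \eqref{plan-rho} and then into \eqref{plan-CS} overshoots, so one must exploit that the source is effectively supported in the wedge $\tfrac12 s\le|r^{*}|\le\tfrac34 s$ and that the Hardy/Cauchy--Schwarz gain in \eqref{plan-CS} compensates one full power of $(1+s)$, leaving a net integrand that is only linear in $s$. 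Verifying that this compensation is genuine, and that the commuted angular energies truly close on the conserved initial data $E[\slashed{\Delta}^{2}u](0)$ rather than feeding back a growing quantity, is the crux on which the stated factor $(1+t)^{2}$ rests.
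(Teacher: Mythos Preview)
Your proposal has a genuine gap at the step where you claim the bulk remainder $\mathcal{R}(s)$ is ``lower order'' and bounded by $C\,E_{\mathcal{C}}(s)+C\,E[u](s)$. When one actually carries out the multiplier computation for $K$, the bulk term that survives is
\[
\int_{\Sigma_s} t\bigl(r^{*}\partial_{r^{*}}V+2V\bigr)\,|\slashed{\nabla}u|^{2}\,dr^{*}d^{2}\omega
=\int_{\Sigma_s} 2tV\mathcal{I}\,|\slashed{\nabla}u|^{2}\,dr^{*}d^{2}\omega,
\]
(this is exactly equation \eqref{ECQ} in the paper). The coefficient $2V\mathcal{I}$ is compactly supported but has no sign, and it does \emph{not} carry a derivative of $V$ available to be cashed in for extra $r^{*}$--decay; this is the trapping term. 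If you bound it crudely by $C\,E_{\mathcal{C}}(s)$ (via $t\chi\le Ct^{2}V$ on the support of $\chi$) and feed that into Gr\"onwall, you get exponential growth in $t$, not $(1+t)^{2}$. Your closing paragraph correctly senses an obstruction but mislocates it in the source coupling; the actual crux is this angular bulk term, and nothing in your outline addresses it.

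The paper's route past this obstacle is structurally different from your plan. It introduces a second, \emph{radial} multiplier $\gamma(u)=g\,\partial_{r^{*}}u+\tfrac12(\partial_{r^{*}}g)u$ with the specific choice $g=\tfrac{t}{r^{*}}\mu(r^{*}/t)\,h(r^{*})$ (equations \eqref{defh}--\eqref{defg}), and derives from it an integrated local energy decay identity (Lemma around \eqref{lemma4}). After a long string of term-by-term estimates using Hardy-type inequalities, this yields a bound on the trapping integral $\int t\chi|\slashed{\nabla}u|^{2}$ in terms of $E_{\gamma}$ boundary terms and $\int(1+t)E_{l}[\slashed{\nabla}u]\,dt$, where $E_{l}$ is a \emph{local} energy. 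The crucial gain is then the comparison $E_{l}[u](t)\le C\bigl(t^{-2}E_{\mathcal{C}}(t)+E[u](0)\bigr)$ (equation \eqref{EL1}), which supplies the missing $t^{-2}$ decay that makes the time integral close polynomially; the H\"older interpolation $E_{l}[\slashed{\nabla}u]\le E_{l}[\slashed{\Delta}^{2}u]^{1/4}E_{l}[u]^{3/4}$ (equation \eqref{holderEl}) is what brings in $E[\slashed{\Delta}^{2}u](0)$ and fixes the exponent $2$. The Gr\"onwall step you propose is used in the paper only on the bounded interval $[0,a]$, where the resulting exponential is just a constant.
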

\begin{proof}
In order to get a bound on the conformal energy $E_{\mathcal{C}}$, we use the Morawetz multiplier $K(u)$ and then we integrate by parts as demonstrated in the proof of the next lemma. 

\begin{lemma}
Let $u$ be a solution to \eqref{boxtilde_u}. There exists a non-negative, compactly supported, smooth function $\chi$ of $r_*$ such that for all $t_2, t_1 > 0$, the following inequality holds:
\begin{eqnarray}\label{lemma3}
E_\mathcal{C}[u,\phi](t_2)-E_\mathcal{C}[u,\phi](t_1)\le \int\limits_{\left[ {{t}_{1}},{{t}_{2}} \right]\times \mathbb{R}\times {{S}^{2}}}t{{\chi }}{{\left| \slashed{\nabla }u \right|}^{2}}dtd{{r}_{*}}{{d}^{2}}\omega +\int\limits_{{\left[ {{t}_{1}},{{t}_{2}} \right]\times \mathbb{R}\times {{S}^{2}}}}{ \mathcal{Q} \left( \phi ,\bar{\phi } \right)dx}~,\notag\\
\end{eqnarray} 
\end{lemma}
\begin{proof}
    Let us proof the lemma using the multiplier method. We define
    \begin{eqnarray}\label{Kbox}
        K(u)\tilde{\Box}u= \mathcal{Q}\left(\phi, \Bar{\phi}\right)~.
    \end{eqnarray}
Then, if we integrate \eqref{Kbox} over the domain $\Omega={\left[ {{t}_{1}},{{t}_{2}} \right]\times \mathbb{R}\times {{S}^{2}}}$, we have
\begin{eqnarray}\label{Q}
      \int\limits_{\Omega }{ \mathcal{Q} \left( \phi ,\bar{\phi } \right)dx}&=&\int\limits_{\Omega }{K\rho \left( \phi ,\bar{\phi } \right)dx} \notag\\ 
 &=&\int\limits_{\Omega }{\left( {{t}^{2}}+{{r}^{*}}^{2} \right){{\partial }_{t}}u\rho dx}+\int\limits_{\Omega }{2t{{r}^{*}}{{\partial }_{{{r}^{*}}}}u\rho dx}~.  
\end{eqnarray}
On the other hand, we have
\begin{eqnarray}
    {{\partial }_{t}}u\Tilde{\Box}u=\frac{1}{2}{{\partial }_{t}}e-{{\partial }_{r*}}\left( {{\partial }_{t}}u{{\partial }_{r*}}u \right)-\slashed{\nabla }\cdot \left( V{{\partial }_{t}}u\slashed{\nabla }u \right)~,
\end{eqnarray}
\begin{eqnarray}
    {{\partial }_{r*}}u\Tilde{\Box}u=-\frac{1}{2}{{\partial }_{r*}}e+{{\partial }_{t}}\left( {{\partial }_{t}}u{{\partial }_{r*}}u \right)-\slashed{\nabla }\cdot \left( V{{\partial }_{r*}}u\slashed{\nabla }u \right)+\frac{1}{2}{{\partial }_{r*}}V{{\left| \slashed{\nabla }u \right|}^{2}}+\frac{1}{2}V{{\partial }_{r*}}{{\left| \slashed{\nabla }u \right|}^{2}}.\notag\\
\end{eqnarray}
Thus,
\begin{eqnarray}\label{multi_K}
    \int\limits_{\Omega }{K\left( u \right)}\Tilde{\Box}udx=\int\limits_{\Omega }{\left( {{t}^{2}}+{{r}^{*}}^{2} \right){{\partial }_{t}}u}\Tilde{\Box}udx+\int\limits_{\Omega }{2t{{r}^{*}}{{\partial }_{{{r}^{*}}}}u}\Tilde{\Box}udx~,
\end{eqnarray}
where for simplicity $dx \equiv dtdr^{*}d\omega^2$. For the first term of \eqref{multi_K}, one can have  
\begin{eqnarray}\label{firsteq}
      \int\limits_{\Omega }{\left( {{t}^{2}}+{{r}^{*}}^{2} \right){{\partial }_{t}}u}\Tilde{\Box}udx&=&\frac{1}{2}\int\limits_{\Omega }{\left( {{t}^{2}}+{{r}^{*}}^{2} \right){{\partial }_{t}}edx}-\frac{1}{2}\int\limits_{\Omega }{\left( {{t}^{2}}+{{r}^{*}}^{2} \right){{\partial }_{{{r}^{*}}}}\left( {{\partial }_{t}}u{{\partial }_{{{r}^{*}}}}u \right)dx} \notag\\ 
 && -\int\limits_{\Omega }{\left( {{t}^{2}}+{{r}^{*}}^{2} \right)\slashed{\nabla }\cdot \left( V{{\partial }_{t}}u\slashed{\nabla }u \right)dx} 
\end{eqnarray}
By integrating by parts with respect to the variable $t$, we obtain 
\begin{eqnarray}
    \frac{1}{2}\int\limits_{\Omega }{\left( {{t}^{2}}+{{r}^{*}}^{2} \right){{\partial }_{t}}edx}=\frac{1}{2}\left( \int\limits_{{{\Sigma }_{{{t}_{2}}}}}{\left( {{t}^{2}}+{{r}^{*}}^{2} \right)ed\tau }-\int\limits_{{{\Sigma }_{{{t}_{1}}}}}{\left( {{t}^{2}}+{{r}^{*}}^{2} \right)ed\tau } \right)-\int\limits_{\Omega }{tedx}~,
\end{eqnarray}
where $d\tau \equiv dr^{*}d\omega^2$.
We do a similar integration by parts for the second term of \eqref{firsteq} but this time in the $r_*$, 
\begin{eqnarray}
    \frac{1}{2}\int\limits_{\Omega }{\left( {{t}^{2}}+{{r}_{*}}_{2} \right){{\partial }_{{{r}_{*}}}}\left( {{\partial }_{t}}u{{\partial }_{{{r}_{*}}}}u \right)dx}=-\int\limits_{\Omega }{{{r}_{*}}{{\partial }_{t}}u{{\partial }_{{{r}_{*}}}}udx}~.
\end{eqnarray}
The last term in  \eqref{firsteq} is zero by the divergence theorem on $S^2$. Thus, we have
\begin{eqnarray}
    \int\limits_{\Omega }{\left( {{t}^{2}}+{{r}^{*}}^{2} \right){{\partial }_{t}}u\Tilde{\Box}u}dx&=&\frac{1}{2}\left( \int\limits_{{{\Sigma }_{{{t}_{2}}}}}{\left( {{t}^{2}}+{{r}^{*}}^{2} \right)ed\tau }-\int\limits_{{{\Sigma }_{{{t}_{1}}}}}{\left( {{t}^{2}}+{{r}^{*}}^{2} \right)ed\tau } \right) \notag\\ 
 && -\int\limits_{\Omega }{tedx}-\int\limits_{\Omega }{{{r}^{*}}{{\partial }_{t}}u{{\partial }_{{{r}^{*}}}}udx}~. 
\end{eqnarray}
Next, using same technique, we derive
\begin{eqnarray}
     \int\limits_{\Omega }{2tr_*{{\partial }_{r_*}}u\tilde{\square }udx}&=&-\int\limits_{\Omega }{tr_*{{\partial }_{r_*}}edx}+\int\limits_{\Omega }{2tr_*{{\partial }_{t}}\left( {{\partial }_{t}}u{{\partial }_{r_*}}u \right)dx}\notag\\
     &&-\int\limits_{\Omega }{2tr_*\slashed{\nabla }\cdot \left( V{{\partial }_{r_*}}u\slashed{\nabla }u \right)dx} \notag\\ 
 && +\int\limits_{\Omega }{tr_*\left\{ {{\partial }_{r_*}}V{{\left| \slashed{\nabla }u \right|}^{2}}+{{\partial }_{r_*}}{{\left| \slashed{\nabla }u \right|}^{2}} \right\}dx} ~.
\end{eqnarray}
Finally, after some computation, we arrive at
\begin{eqnarray}
     && \frac{1}{2}\left( \int\limits_{{{\Sigma }_{{{t}_{2}}}}}{\left( {{t}^{2}}+{{r}_{*}}^{2}+2t{{r}_{*}}{{\partial }_{t}}u{{\partial }_{{{r}_{*}}}}ud\tau  \right)ed\tau } \right)-\frac{1}{2}\left( \int\limits_{{{\Sigma }_{t1}}}{\left( {{t}^{2}}+{{r}_{*}}^{2}+2t{{r}_{*}}{{\partial }_{t}}u{{\partial }_{{{r}_{*}}}}ud\tau  \right)ed\tau } \right) \notag\\ 
 & =&\int\limits_{\Omega }{t\left\{ r_*{{\partial }_{r_*}}V+2V \right\}{{\left| \slashed{\nabla }u \right|}^{2}}dx}+\int\limits_{\Omega }{ \mathcal{Q} \left( \phi ,\bar{\phi } \right)dx} ~.
\end{eqnarray}
As a result, we are able to obtain
\begin{eqnarray}\label{ECQ}
    E_\mathcal{C}[u,\phi](t_2)-E_\mathcal{C}[u,\phi](t_1)&=& \int\limits_{\left[ {{t}_{1}},{{t}_{2}} \right]\times \mathbb{R}\times {{S}^{2}}}{t\left\{ r_*{{\partial }_{r_*}}V+2V \right\}}{{\left| \slashed{\nabla }u \right|}^{2}}dtd{{r}_{*}}{{d}^{2}}\omega \notag\\
    &&+\int\limits_{{\left[ {{t}_{1}},{{t}_{2}} \right]\times \mathbb{R}\times {{S}^{2}}}}{\mathcal{Q}\left( \phi ,\bar{\phi } \right)dx}\notag\\
    &=&\int\limits_{\left[ {{t}_{1}},{{t}_{2}} \right]\times \mathbb{R}\times {{S}^{2}}}{ 2tV \mathcal{I} }{{\left| \slashed{\nabla }u \right|}^{2}}dtd{{r}_{*}}{{d}^{2}}\omega +\int\limits_{{\left[ {{t}_{1}},{{t}_{2}} \right]\times \mathbb{R}\times {{S}^{2}}}}{ \mathcal{Q} \left( \phi ,\bar{\phi } \right)dx}~,\notag\\
\end{eqnarray} 
where $\mathcal{I} \equiv 1+{{r}_{*}}\left( \frac{{{f}'}}{2}-\frac{f}{r} \right)$. Then, there will always be a function $\chi$ that is always greater than or equal to $2V\mathcal{I}$. This completes the proof of \eqref{lemma3}.
\end{proof}

Next, to bound \eqref{lemma3}, we need the following lemma
\begin{lemma}
Let $\gamma(u)$ be defined as a radial multiplier:
\begin{eqnarray}
    \gamma \left( u,\phi \right)\equiv g{{\partial }_{{{r}_{*}}}}u+\frac{1}{2}\left( {{\partial }_{{{r}_{*}}}}g \right)u~,
\end{eqnarray}
where $u$ is a smooth solution of \eqref{mideq} and $g$ in this paragraph denotes a function of the $t$ and $r_*$ variables only. Define the functional energy $E_{\gamma}[u,\phi]\left( t \right)$ by
    \begin{eqnarray}\label{defEgamma}
        {{E}_{\gamma }}[u,\phi]\left( t \right)\equiv\int\limits_{{{\Sigma }_{t}}}{\gamma \left( u,\phi \right){{\partial }_{t}}ud\varsigma }~,
    \end{eqnarray}
where $\Sigma_t$ denotes the spatial hypersurface at time $t$. Then, we have
\begin{eqnarray}\label{lemma4}
      2{{E}_{\gamma }}[u,\phi]\left( {{t}_{2}} \right)-2{{E}_{\gamma }}[u,\phi]\left( {{t}_{1}} \right)&=&-\int\limits_{\left[ {{t}_{1}},{{t}_{2}} \right]\times \mathbb{R}\times {{S}^{2}}}{\left( 2\left( {{\partial }_{{{r}_{*}}}}g \right) \right.}{{\left( {{\partial }_{{{r}_{*}}}}u \right)}^{2}}-\frac{1}{2}{{u}^{2}}\left( \partial _{{{r}_{*}}}^{3}g \right)-g{{\left| \slashed{\nabla }u \right|}^{2}}{{\partial }_{{{r}_{*}}}}V \notag\\ 
 &&- \left. 2\left( {{\partial }_{t}}u \right)\left( {{\partial }_{t}}g \right)\left( {{\partial }_{{{r}_{*}}}}u \right)-u\left( {{\partial }_{t}}u \right){{\partial }_{t}}{{\partial }_{{{r}_{*}}}}g \right)dx \notag\\ 
 && +\int\limits_{\left[ {{t}_{1}},{{t}_{2}} \right]\times \mathbb{R}\times {{S}^{2}}}{\left\{ g\left( {{\partial }_{{{r}_{*}}}}u \right)+\frac{1}{2}\left( {{\partial }_{{{r}_{*}}}}g \right)u \right\}\rho \left( \phi ,\bar{\phi } \right)}dx \notag\\
\end{eqnarray}
\end{lemma}
\begin{proof}
Let us consider the following equation,
    \begin{eqnarray}\label{Egamma1}
        2{{E}_{\gamma }}[u,\phi]\left( {{t}_{2}} \right)-2{{E}_{\gamma }}[u,\phi]\left( {{t}_{1}} \right)&=&2\int\limits_{\left[ {{t}_{1,}}{{t}_{2}} \right]}{{{\partial }_{t}}}{{E}_{\gamma }}[u,\phi]\left( t \right)dt \notag\\ 
 & =&2\int\limits_{\left[ {{t}_{1,}}{{t}_{2}} \right]}{{{\partial }_{t}}}\left( \int\limits_{{{\Sigma }_{t}}}{\gamma {{\partial }_{t}}ud\varsigma } \right)dt ~,
    \end{eqnarray}
where
\begin{eqnarray}\label{gammaudot}
     {{\partial }_{t}}\left( \gamma \dot{u} \right)&=&\gamma \ddot{u}+\dot{\gamma }\dot{u} \notag\\ 
 & =&\gamma {u}''+\gamma V\slashed{\Delta }u+\dot{\gamma }\dot{u}+\gamma \rho~.   
\end{eqnarray}
Each term in \eqref{gammaudot} can be expressed into
\begin{eqnarray}
    \gamma \slashed{\Delta }u=\slashed{\nabla }\cdot\left(\gamma\slashed{\nabla}u\right)-\slashed{\nabla}\gamma\cdot\slashed{\nabla}u~,
\end{eqnarray}
\begin{eqnarray}
    2\gamma {u}''=\partial_{r_*}\left(g{u'}^2+uu'g'\right)-2g'{u'}^2-uu'{g}''~,
\end{eqnarray}
\begin{eqnarray}
    2\dot{\gamma}\dot{u}=\partial_r \left(g\dot{u}^2\right)+u\dot{u}\dot{g}'+2\dot{u}\dot{g}u'~.
\end{eqnarray}
Note that we use dot notation for time derivative and prime for derivation with respect to the radial coordinate $r_*$. Combining all the terms, one can show that
\begin{eqnarray}
     2{{\partial }_{t}}\left( \gamma \dot{u} \right)&=&{{\partial }_{{{r}_{*}}}}\left( {{\left( {{u}'} \right)}^{2}}g+u{u}'{g}'-Vg{{\left| \slashed{\nabla }u \right|}^{2}}-\frac{1}{2}{{u}^{2}}{g}''+g{{{\dot{u}}}^{2}} \right) \notag\\ 
 && +2\slashed{\nabla }\cdot \left( V\gamma \slashed{\nabla }u \right)+\frac{1}{2}{{u}^{2}}{g}'''+u\dot{u}{\dot{g}}' \notag\\ 
 && +2\dot{u}\dot{g}{u}'-{{\partial }_{{{r}_{*}}}}Vg{{\left| \slashed{\nabla }u \right|}^{2}}-2{g}'{{{{u}'}}^{2}}\notag\\
 &&+\left(g{{\partial }_{{{r}_{*}}}}u+\frac{1}{2}\left( {{\partial }_{{{r}_{*}}}}g \right)u\right)\rho ~.
\end{eqnarray}
Then, \eqref{Egamma1} can be cast into
\begin{eqnarray}
       2{{E}_{\gamma }}\left( {{t}_{2}} \right)-2{{E}_{\gamma }}\left( {{t}_{1}} \right) & =&\int\limits_{\left[ {{t}_{1}},{{t}_{2}} \right]\times \mathbb{R}\times {{S}^{2}}}{{{\partial }_{{{r}_{*}}}}\left( {{\left( {{u}'} \right)}^{2}}g+u{u}'{g}'-Vg{{\left| \slashed{\nabla }u \right|}^{2}}-\frac{1}{2}{{u}^{2}}{g}''+g{{{\dot{u}}}^{2}} \right)}dx \notag\\ 
 && +\int\limits_{\left[ {{t}_{1}},{{t}_{2}} \right]\times \mathbb{R}\times {{S}^{2}}}{2\slashed{\nabla }\cdot \left( V\gamma \slashed{\nabla }u \right)}dx \notag\\ 
 && +\int\limits_{\left[ {{t}_{1}},{{t}_{2}} \right]\times \mathbb{R}\times {{S}^{2}}}{\left( \frac{1}{2}{{u}^{2}}{g}'''+u\dot{u}{\dot{g}}'+2\dot{u}\dot{g}{u}'-{{\partial }_{{{r}_{*}}}}Vg{{\left| \slashed{\nabla }u \right|}^{2}}-2{g}'{{{{u}'}}^{2}}+\gamma \rho  \right)}dx~.\notag\\
\end{eqnarray}
The first and the second term is vanishes since on the boundary as $r_*\to\pm\infty$, the smooth function $u,g$ and their derivatives go to zero.
\end{proof}

Now we are ready to derive the estimate of the uniform bound of the conformal energy for the middle component of Maxwell-Higgs system as in \eqref{prep1}, which is the most important estimate in this section. Let us set
\begin{eqnarray}
       h\left( {{r}_{*}} \right)&=&\int_{0}^{{{r}_{*}}}{\frac{1}{{{\left( 1+{{\left( \epsilon y \right)}^{2}} \right)}^{\sigma }}}}dy ~,\label{defh}\\ 
  g&=&\frac{t}{r_*}\mu h  ~,\label{defg}
\end{eqnarray}
where $\sigma \in [1,2]$, $\epsilon>0$, and $\mu=\Tilde{\mu}\left(\frac{r_*}{t}\right)$  with $\Tilde{\mu}$ a smooth function with compactly supported in $\left(-\frac{3}{4},\frac{3}{4}\right)$ which is identically 1 on $\left[ -\frac{1}{2},\frac{1}{2} \right]$. Next, using \eqref{defg}, we can re arrange the terms in \eqref{lemma4} to get
\begin{eqnarray}\label{Egammaesti}
 2{{E}_{\gamma }}\left( {{t}_{2}} \right)-2{{E}_{\gamma }}\left( {{t}_{1}} \right)&=&\frac{1}{2}\int\limits_{\left[ {{t}_{1}},{{t}_{2}} \right]\times \mathbb{R}\times {{S}^{2}}}{\left\{ {{u}^{2}}\left( \frac{t}{{{r}_{*}}}\left( {\mu }'''h+3{\mu }''{h}'+3{\mu }'{h}''+\mu {h}''' \right) \right. \right.} \notag\\ 
 && -\frac{t}{r_{*}^{2}}(3{\mu }''h+6{\mu }'{h}'+3\mu {h}'')+\left. \frac{6t\left( {\mu }'h+\mu {h}' \right)}{r_{*}^{3}}-\frac{6t\mu h}{r_{*}^{4}} \right) \notag\\ 
 && +u\dot{u}\left( -\frac{\mu h}{r_{*}^{2}}+\frac{{\mu }'h}{{{r}_{*}}}+\frac{\mu {h}'}{{{r}_{*}}}-\frac{t\mu {h}'}{r_{*}^{2}}+\frac{2t{\mu }'{h}'}{{{r}_{*}}}+\frac{t\mu {h}''}{{{r}_{*}}} \right. \notag\\ 
 &&- \left. \frac{t{\mu }'h}{r_{*}^{2}}+\frac{t{\mu }''h}{{{r}_{*}}} \right)+2\dot{u}{u}'\frac{\mu h}{{{r}_{*}}}-\frac{t\mu h}{{{r}_{*}}}{V}'{{\left| \slashed{\nabla }u \right|}^{2}} \notag\\ 
 && -\left. 2{{{{u}'}}^{2}}\left( \frac{t}{{{r}_{*}}}\left( {\mu }'h+\mu {h}' \right)-\frac{t\mu h}{r_{*}^{2}} \right)+\gamma \rho  \right\}dx  ~.
\end{eqnarray}
Lets consider the estimate for $h$ and its derivative, since $h$ is bounded function then,
\begin{eqnarray}
    \left| h \right|\le C~.
\end{eqnarray}
For the first and second derivative of $h$, for $\sigma \in [1,2]$ and $\epsilon>0$ we have
\begin{eqnarray}
    \left| {{h}'} \right|=\frac{1}{{{\left( 1+{{\left( \epsilon {{r}_{*}} \right)}^{2}} \right)}^{\sigma }}}\le \frac{1}{1+{{r}_{*}}^{2}}\le C~,
\end{eqnarray}
\begin{eqnarray}
    \left| {{h}''} \right|=\frac{2{{\varepsilon }^{2}}\sigma \left| {{r}_{*}} \right|}{{{\left( 1+{{\left( \epsilon {{r}_{*}} \right)}^{2}} \right)}^{\sigma +1}}}\le \frac{1}{1+{{r}_{*}}^{2}}\le C~.
\end{eqnarray}
For the third derivative of $h$, one can show that
\begin{eqnarray}
   \left| {{h}'''} \right|=\frac{2{{\varepsilon }^{2}}\sigma \left| 1-\left( 2\sigma +1 \right){{\left( \epsilon {{r}_{*}} \right)}^{2}} \right|}{{{\left( 1+{{\left( \epsilon {{r}_{*}} \right)}^{2}} \right)}^{\sigma +2}}}\le \frac{2{{\epsilon }^{2}}\sigma \left( \left( 2\sigma +1 \right)\left( {{\epsilon }^{2}}r_{*}^{2}+2\sigma +1 \right) \right)}{{{\left( 1+{{\left( \epsilon {{r}_{*}} \right)}^{2}} \right)}^{\sigma +2}}} ~,
\end{eqnarray}
and for $t\ge 1$, we also have
\begin{eqnarray}
    \partial _{{{r}_{*}}}^{n}\mu =\frac{1}{{{t}^{n}}}\frac{{{d}^{n}}\tilde{\mu }}{d{{\tau }^{n}}}\left( \frac{{{r}_{*}}}{t} \right)\le \frac{C}{{{t}^{n}}}\le C~.
\end{eqnarray}
Thus, we can bound each term in \eqref{Egammaesti} as follows: 
\begin{eqnarray}\label{eqb1}
    \left| \int\limits_{\left[ {{t}_{1}},{{t}_{2}} \right]\times \mathbb{R}\times {{S}^{2}}}{\left( 3\frac{t}{{{r}_{*}}}{{u}^{2}}{\mu }''{h}'+3\frac{t}{{{r}_{*}}}{{u}^{2}}{\mu }'{h}'' \right)d\varsigma } \right|\le C\int\limits_{\left[ {{t}_{1}},{{t}_{2}} \right]\times \left\{ \frac{1}{2}t\le \left| {{r}_{*}} \right|\le \frac{3}{4}t \right\}\times {{S}^{2}}}{\frac{1}{{{r}_{*}}}\frac{{{u}^{2}}}{1+{{r}_{*}}^{2}}d\varsigma }~,
\end{eqnarray}
\begin{eqnarray}
    \left| \int\limits_{\left[ {{t}_{1}},{{t}_{2}} \right]\times \mathbb{R}\times {{S}^{2}}}{6\frac{t}{{{r}_{*}}^{2}}{{u}^{2}}{\mu }'{h}'d\varsigma } \right|\le C\int\limits_{\left[ {{t}_{1}},{{t}_{2}} \right]\times \left\{ \frac{1}{2}t\le \left| {{r}_{*}} \right|\le \frac{3}{4}t \right\}\times {{S}^{2}}}{\frac{1}{{{r}_{*}}^{2}}\frac{{{u}^{2}}}{1+{{r}_{*}}^{2}}d\varsigma }~,
\end{eqnarray}
\begin{eqnarray}
    \left| \int\limits_{\left[ {{t}_{1}},{{t}_{2}} \right]\times \mathbb{R}\times {{S}^{2}}}{\left( 3\frac{t}{{{r}_{*}}^{2}}{\mu }''h \right){{u}^{2}}d\varsigma } \right|\le C\int\limits_{\left[ {{t}_{1}},{{t}_{2}} \right]\times \left\{ \frac{1}{2}t\le \left| {{r}_{*}} \right|\le \frac{3}{4}t \right\}\times {{S}^{2}}}{\frac{{{u}^{2}}}{{{r}_{*}}^{2}}d\varsigma }~,
\end{eqnarray}
\begin{eqnarray}
    \left| \int\limits_{\left[ {{t}_{1}},{{t}_{2}} \right]\times \mathbb{R}\times {{S}^{2}}}{\frac{t}{{{r}_{*}}}{{u}^{2}}{\mu }'''hd\varsigma } \right|\le C\int\limits_{\left[ {{t}_{1}},{{t}_{2}} \right]\times \left\{ \frac{1}{2}t\le \left| {{r}_{*}} \right|\le \frac{3}{4}t \right\}\times {{S}^{2}}}{\frac{{{u}^{2}}}{{{r}_{*}}}d\varsigma }~,
\end{eqnarray}
\begin{eqnarray}
    \left| \int\limits_{\left[ {{t}_{1}},{{t}_{2}} \right]\times \mathbb{R}\times {{S}^{2}}}\left(\frac{6t{\mu }'h{{u}^{2}}}{r_{*}^{3}}+\frac{6t\mu h}{r_{*}^{4}}\right)d\varsigma  \right|\le C\int\limits_{\left[ {{t}_{1}},{{t}_{2}} \right]\times \left\{ \frac{1}{2}t\le \left| {{r}_{*}} \right|\le \frac{3}{4}t \right\}\times {{S}^{2}}}{\frac{{{u}^{2}}}{r_{*}^{3}}d\varsigma }~,
\end{eqnarray}
\begin{eqnarray}
    \left| \int\limits_{\left[ {{t}_{1}},{{t}_{2}} \right]\times \mathbb{R}\times {{S}^{2}}}{\left( \frac{t}{{{r}_{*}}}\mu {h}'''+3\frac{t}{{{r}_{*}}^{2}}\mu {h}''+\frac{6t\mu {h}'}{r_{*}^{3}} \right){{u}^{2}}d\varsigma } \right|\le C\int\limits_{\left[ {{t}_{1}},{{t}_{2}} \right]\times \left\{ \frac{1}{2}t\le \left| {{r}_{*}} \right|\le \frac{3}{4}t \right\}\times {{S}^{2}}}{\frac{{{u}^{2}}}{1+{{r}_{*}}^{2}}d\varsigma }~,\notag\\
\end{eqnarray}
\begin{eqnarray}
    \left| \int\limits_{\left[ {{t}_{1}},{{t}_{2}} \right]\times \mathbb{R}\times {{S}^{2}}}{\left( \frac{\mu {h}'}{{{r}_{*}}}+\frac{t\mu {h}'}{r_{*}^{2}}+\frac{t\mu {h}''}{{{r}_{*}}} \right)u\dot{u}d\varsigma } \right|\le C\int\limits_{\left[ {{t}_{1}},{{t}_{2}} \right]\times \left\{ \frac{1}{2}t\le \left| {{r}_{*}} \right|\le \frac{3}{4}t \right\}\times {{S}^{2}}}{\frac{u\dot{u}}{1+{{r}_{*}}^{2}}d\varsigma }~,
\end{eqnarray}
\begin{eqnarray}
    \left| \int\limits_{\left[ {{t}_{1}},{{t}_{2}} \right]\times \mathbb{R}\times {{S}^{2}}}{\left( -\frac{\mu h}{r_{*}^{2}}+\frac{t{\mu }'h}{r_{*}^{2}} \right)u\dot{u}d\varsigma } \right|\le C\int\limits_{\left[ {{t}_{1}},{{t}_{2}} \right]\times \left\{ \frac{1}{2}t\le \left| {{r}_{*}} \right|\le \frac{3}{4}t \right\}\times {{S}^{2}}}{\frac{u\dot{u}}{r_{*}^{2}}d\varsigma }~,
\end{eqnarray}
\begin{eqnarray}
    \left| \int\limits_{\left[ {{t}_{1}},{{t}_{2}} \right]\times \mathbb{R}\times {{S}^{2}}}{\left( \frac{{\mu }'h}{{{r}_{*}}}+\frac{t{\mu }''h}{{{r}_{*}}} \right)u\dot{u}d\varsigma } \right|\le C\int\limits_{\left[ {{t}_{1}},{{t}_{2}} \right]\times \left\{ \frac{1}{2}t\le \left| {{r}_{*}} \right|\le \frac{3}{4}t \right\}\times {{S}^{2}}}{\frac{u\dot{u}}{{{r}_{*}}}d\varsigma }~,
\end{eqnarray}
\begin{eqnarray}
    \left| \int\limits_{\left[ {{t}_{1}},{{t}_{2}} \right]\times \mathbb{R}\times {{S}^{2}}}{\frac{2t{\mu }'{h}'}{{{r}_{*}}}u\dot{u}d\varsigma } \right|\le C\int\limits_{\left[ {{t}_{1}},{{t}_{2}} \right]\times \left\{ \frac{1}{2}t\le \left| {{r}_{*}} \right|\le \frac{3}{4}t \right\}\times {{S}^{2}}}{\frac{1}{{{r}_{*}}}\frac{u\dot{u}}{1+{{r}_{*}}^{2}}d\varsigma }~,
\end{eqnarray}
\begin{eqnarray}
    \left| \int\limits_{\left[ {{t}_{1}},{{t}_{2}} \right]\times \mathbb{R}\times {{S}^{2}}}{2\dot{u}{u}'\frac{\mu h}{{{r}_{*}}}d\varsigma } \right|\le C\int\limits_{\left[ {{t}_{1}},{{t}_{2}} \right]\times \left\{ \frac{1}{2}t\le \left| {{r}_{*}} \right|\le \frac{3}{4}t \right\}\times {{S}^{2}}}{\frac{\dot{u}{u}'}{{{r}_{*}}}d\varsigma }~,
\end{eqnarray}
\begin{eqnarray}\label{eqb12}
    \left| \int\limits_{\left[ {{t}_{1}},{{t}_{2}} \right]\times \mathbb{R}\times {{S}^{2}}}{\left( -2{{{{u}'}}^{2}}\frac{t}{{{r}_{*}}}{\mu }'h+2{{{{u}'}}^{2}}\frac{t\mu h}{r_{*}^{2}} \right)d\varsigma } \right|\le C\int\limits_{\left[ {{t}_{1}},{{t}_{2}} \right]\times \left\{ \frac{1}{2}t\le \left| {{r}_{*}} \right|\le \frac{3}{4}t \right\}\times {{S}^{2}}}{\frac{{{{{u}'}}^{2}}}{{{r}_{*}}}d\varsigma }~,
\end{eqnarray}
To get bound on \eqref{eqb1}- \eqref{eqb12}, we need the following Hardy-like estimates
\begin{lemma}
    Let $t\ge 1$, $0<\sigma$, and $\xi$ be a non negative function of $r_*$, which is positive in an open non-empty subinterval of $r_*\le \frac{1}{2}$ and $u$ be a smooth compactly supported function. Then,
\begin{eqnarray}
    \int\limits_{\left[ {{t}_{1}},{{t}_{2}} \right]\times \left\{ \left| {{r}_{*}} \right|\le \frac{1}{2} \right\}\times {{S}^{2}}}{\frac{{{u}^{2}}}{{{\left( 1+r_{*}^{2} \right)}^{\sigma +1}}}}d\varsigma \le {{\int\limits_{\left[ {{t}_{1}},{{t}_{2}} \right]\times \left\{ \left| {{r}_{*}} \right|\le \frac{1}{2} \right\}\times {{S}^{2}}}{\left( \frac{{{\left( {{\partial }_{{{r}_{*}}}}u \right)}^{2}}}{{{\left( 1+r_{*}^{2} \right)}^{\sigma }}}+\xi u \right)}}^{2}}d\varsigma~, 
\end{eqnarray}
and
\begin{eqnarray}
   \int\limits_{\left[ {{t}_{1}},{{t}_{2}} \right]\times \left\{ \left| {{r}_{*}} \right|\le \frac{3}{4} \right\}\times {{S}^{2}}}{\frac{{{u}^{2}}}{1+r_{*}^{2}}}d\varsigma \le C{{E}_{l}}[u,\phi]\left( t \right) ~.
\end{eqnarray}
\end{lemma}
\begin{proof}
The proof of the following Hardy-like estimates can be found in \cite{blue}.
\end{proof}
By employing the Hardy-like estimates, we can obtain
\begin{eqnarray}
    \int\limits_{\left[ {{t}_{1}},{{t}_{2}} \right]\times \left\{ \frac{1}{2}t\le \left| {{r}_{*}} \right|\le \frac{3}{4}t \right\}\times {{S}^{2}}}{\frac{1}{{{r}_{*}}}\frac{{{u}^{2}}}{1+{{r}_{*}}^{2}}d\varsigma }\le \int\limits_{\left[ {{t}_{1}},{{t}_{2}} \right]\times \left\{ \frac{1}{2}t\le \left| {{r}_{*}} \right|\le \frac{3}{4}t \right\}\times {{S}^{2}}}{{{\left( {{\partial }_{{{r}_{*}}}}u \right)}^{2}}d\varsigma }\le {{E}_{l}}~,
\end{eqnarray}
\begin{eqnarray}
    \int\limits_{\left[ {{t}_{1}},{{t}_{2}} \right]\times \left\{ \frac{1}{2}t\le \left| {{r}_{*}} \right|\le \frac{3}{4}t \right\}\times {{S}^{2}}}{\frac{u\dot{u}}{1+{{r}_{*}}^{2}}d\varsigma }\le \int\limits_{\left[ {{t}_{1}},{{t}_{2}} \right]\times \left\{ \frac{1}{2}t\le \left| {{r}_{*}} \right|\le \frac{3}{4}t \right\}\times {{S}^{2}}}{\frac{{{\left( u \right)}^{2}}+{{\left( {\dot{u}} \right)}^{2}}}{1+{{r}_{*}}^{2}}d\varsigma }\le {{E}_{l}}~.
\end{eqnarray}
We also get the same result for the remaining terms of \eqref{Egammaesti}.  Thus, to this point, we can rearrange \eqref{Egammaesti} in the following way
\begin{eqnarray}\label{egammaformlain}
     \int\limits_{\left[ {{t}_{1}},{{t}_{2}} \right]\times \mathbb{R}\times {{S}^{2}}}{\left(  2\frac{t}{{{r}_{*}}}\mu {h}'{{{{u}'}}^{2}}+\frac{t\mu h}{{{r}_{*}}}{V}'{{\left| \slashed{\nabla }u \right|}^{2}}  \right)}dx&\le& -2{{E}_{\gamma }}\left[ u \right]\left( {{t}_{2}} \right)+2{{E}_{\gamma }}\left[ u \right]\left( {{t}_{1}} \right) \notag\\ 
&& +\int\limits_{\left[ {{t}_{1}},{{t}_{2}} \right]\times \mathbb{R}\times {{S}^{2}}}\left\{ \frac{t}{{{r}_{*}}}\mu h{u}'+\frac{1}{2}\frac{t}{{{r}_{*}}}\left( {\mu }'h+\mu {h}' \right)u\right.\notag\\ 
&&-\left.\frac{1}{2}\frac{t\mu h}{r_{*}^{2}}u \right\}\rho dx\notag\\
 && +\hat{C}\int_{{{t}_{1}}}^{{{t}_{2}}}{{{E}_{l}}}\left[ u,\phi \right]\left( t \right)dt ~.
\end{eqnarray}
Next, for the source term, we have
\begin{eqnarray}\label{source}
       \int\limits_{\left[ {{t}_{1}},{{t}_{2}} \right]\times \mathbb{R}\times {{S}^{2}}}{\left\{ \frac{t}{{{r}_{*}}}\mu h{u}'+\frac{1}{2}\frac{t}{{{r}_{*}}}\left( {\mu }'h+\mu {h}' \right)u-\frac{1}{2}\frac{t\mu h}{r_{*}^{2}}u \right\}\rho }d\varsigma &\le& \hat{C}\int\limits_{\left[ {{t}_{1}},{{t}_{2}} \right]\times \mathbb{R}\times {{S}^{2}}}{{u}'\rho d\varsigma } \notag\\ 
 & +&
 \hat{C}\int\limits_{\left[ {{t}_{1}},{{t}_{2}} \right]\times \mathbb{R}\times {{S}^{2}}}{\frac{u\rho }{1+{{r}_{*}}^{2}}d\varsigma }\notag\\
 &+&\hat{C}\int\limits_{\left[ {{t}_{1}},{{t}_{2}} \right]\times \mathbb{R}\times {{S}^{2}}}{u\rho d\varsigma }~.\notag\\
\end{eqnarray}
For the first term of \eqref{source}, we use the Cauchy-Schwarz inequality such that
\begin{eqnarray}
   \int\limits_{\left[ {{t}_{1}},{{t}_{2}} \right]\times \left\{ \frac{1}{2}t\le \left| {{r}_{*}} \right|\le \frac{3}{4}t \right\}\times {{S}^{2}}}{{u}'\rho d\varsigma }&\le& {{\left( \int\limits_{\left[ {{t}_{1}},{{t}_{2}} \right]\times \left\{ \frac{1}{2}t\le \left| {{r}_{*}} \right|\le \frac{3}{4}t \right\}\times {{S}^{2}}}{{{\left| {{u}'} \right|}^{2}}d\varsigma } \right)}^{1/2}}\notag\\
   &&\cdot{{\left( \int\limits_{\left[ {{t}_{1}},{{t}_{2}} \right]\times \left\{ \frac{1}{2}t\le \left| {{r}_{*}} \right|\le \frac{3}{4}t \right\}\times {{S}^{2}}}{{{\left| \rho  \right|}^{2}}d\varsigma } \right)}^{1/2}} \notag\\ 
 & \le& {{E}_{l}}{{\left( \int\limits_{\left[ {{t}_{1}},{{t}_{2}} \right]\times \left\{ \frac{1}{2}t\le \left| {{r}_{*}} \right|\le \frac{3}{4}t \right\}\times {{S}^{2}}}{{{\left| \rho  \right|}^{2}}d\varsigma } \right)}^{1/2}} ~.
\end{eqnarray}
Next, let us define
\begin{eqnarray}
    \mathcal{Y}\equiv {{\left( \int\limits_{\left[ {{t}_{1}},{{t}_{2}} \right]\times \left\{ \frac{1}{2}t\le \left| {{r}_{*}} \right|\le \frac{3}{4}t \right\}\times {{S}^{2}}}{{{\left| \rho  \right|}^{2}}d\varsigma } \right)}^{1/2}}~.
\end{eqnarray}
Then, by using the Minkowski inequality, we have
\begin{eqnarray} \label{estirhoinY} \mathcal{Y}\le\mathcal{Y}^{1/2}_1+\mathcal{Y}^{1/2}_2+\mathcal{Y}^{1/2}_3+\mathcal{Y}^{1/2}_4~.
\end{eqnarray}
where
\begin{eqnarray}
     {\mathcal{Y}_{1}}&\equiv& {{ \int\limits_{\left[ {{t}_{1}},{{t}_{2}} \right]\times \left\{ \frac{1}{2}t\le \left| {{r}_{*}} \right|\le \frac{3}{4}t \right\}\times {{S}^{2}}}{{{\left| {{M}_{1}}f{{D}_{2}}\phi \bar{\phi } \right|}^{2}}d\varsigma } }}~, \\ 
  {\mathcal{Y}_{2}}&\equiv& {{ \int\limits_{\left[ {{t}_{1}},{{t}_{2}} \right]\times \left\{ \frac{1}{2}t\le \left| {{r}_{*}} \right|\le \frac{3}{4}t \right\}\times {{S}^{2}}}{{{\left| \frac{{{M}_{1}}f}{sin\theta }{{D}_{3}}\phi \bar{\phi } \right|}^{2}}d\varsigma } }}~, 
\end{eqnarray}
\begin{eqnarray}
     {\mathcal{Y}_{3}}&\equiv& {{\int\limits_{\left[ {{t}_{1}},{{t}_{2}} \right]\times \left\{ \frac{1}{2}t\le \left| {{r}_{*}} \right|\le \frac{3}{4}t \right\}\times {{S}^{2}}}{{{\left| L{{r}^{2}}{{D}_{0}}\phi \bar{\phi } \right|}^{2}}d\varsigma } }} ~,\\ 
 {\mathcal{Y}_{4}}&\equiv& {{\int\limits_{\left[ {{t}_{1}},{{t}_{2}} \right]\times \left\{ \frac{1}{2}t\le \left| {{r}_{*}} \right|\le \frac{3}{4}t \right\}\times {{S}^{2}}}{{{\left| L{{r}^{2}}{{D}_{1}}\phi \bar{\phi } \right|}^{2}}d\varsigma } }}~.
\end{eqnarray}
First, we consider the ﬁrst term of \eqref{estirhoinY} which again by Minkowski inequality satisfies
\begin{eqnarray}\label{Y1}
      {\mathcal{Y}_{1}}&\le& \int\limits_{\left[ {{t}_{1}},{{t}_{2}} \right]\times \left\{ \frac{1}{2}t\le \left| {{r}_{*}} \right|\le \frac{3}{4}t \right\}\times {{S}^{2}}}{{{\left| f\bar{\phi }{{\partial }_{2}}{{D}_{2}}\phi  \right|}^{2}}d\varsigma }+\int\limits_{\left[ {{t}_{1}},{{t}_{2}} \right]\times \left\{ \frac{1}{2}t\le \left| {{r}_{*}} \right|\le \frac{3}{4}t \right\}\times {{S}^{2}}}{{{\left| f{{D}_{2}}\phi {{\partial }_{2}}\bar{\phi } \right|}^{2}}d\varsigma } \notag\\ 
 && +\int\limits_{\left[ {{t}_{1}},{{t}_{2}} \right]\times \left\{ \frac{1}{2}t\le \left| {{r}_{*}} \right|\le \frac{3}{4}t \right\}\times {{S}^{2}}}{{{\left| f\bar{\phi }{{\partial }_{3}}{{D}_{2}}\phi  \right|}^{2}}d\varsigma }+\int\limits_{\left[ {{t}_{1}},{{t}_{2}} \right]\times \left\{ \frac{1}{2}t\le \left| {{r}_{*}} \right|\le \frac{3}{4}t \right\}\times {{S}^{2}}}{{{\left| f{{D}_{2}}\phi {{\partial }_{3}}\bar{\phi } \right|}^{2}}d\varsigma }~.\notag\\
\end{eqnarray}
Then, for the first term of \eqref{Y1}, we obtain
\begin{eqnarray}\label{y1_1}
       \int\limits_{\left[ {{t}_{1}},{{t}_{2}} \right]\times \left\{ \frac{1}{2}t\le \left| {{r}_{*}} \right|\le \frac{3}{4}t \right\}\times {{S}^{2}}}{{{\left| f\bar{\phi }{{\partial }_{2}}{{D}_{2}}\phi  \right|}^{2}}d\varsigma }&\le& \int\limits_{\left[ {{t}_{1}},{{t}_{2}} \right]\times \left\{ \frac{1}{2}t\le \left| {{r}_{*}} \right|\le \frac{3}{4}t \right\}\times {{S}^{2}}}{{{\left| f\bar{\phi }{{\partial }_{2}}{{\partial }_{2}}\phi  \right|}^{2}}d\varsigma }\notag\\
       &&+\int\limits_{\left[ {{t}_{1}},{{t}_{2}} \right]\times \left\{ \frac{1}{2}t\le \left| {{r}_{*}} \right|\le \frac{3}{4}t \right\}\times {{S}^{2}}}{{{\left| f{{\partial }_{2}}{{A}_{2}}{{\phi }^{2}} \right|}^{2}}d\varsigma } \notag\\ 
 &&+ \int\limits_{\left[ {{t}_{1}},{{t}_{2}} \right]\times \left\{ \frac{1}{2}t\le \left| {{r}_{*}} \right|\le \frac{3}{4}t \right\}\times {{S}^{2}}}{{{\left| f\bar{\phi }{{A}_{2}}{{\partial }_{2}}\phi  \right|}^{2}}d\varsigma }~.
\end{eqnarray}
Using the H\"older inequality, we then have
\begin{eqnarray}
    \int\limits_{\left[ {{t}_{1}},{{t}_{2}} \right]\times \left\{ \frac{1}{2}t\le \left| {{r}_{*}} \right|\le \frac{3}{4}t \right\}\times {{S}^{2}}}{{{\left| f\bar{\phi }{{\partial }_{2}}{{\partial }_{2}}\phi  \right|}^{2}}d\varsigma }\le {{\left\| f\phi \right\|}_{L_{loc}^{\infty }}}\int\limits_{\left[ {{t}_{1}},{{t}_{2}} \right]\times \left\{ \frac{1}{2}t\le \left| {{r}_{*}} \right|\le \frac{3}{4}t \right\}\times {{S}^{2}}}{{{\left| {{\partial }_{2}}{{\partial }_{2}}\phi  \right|}^{2}}d\varsigma }~,
\end{eqnarray}
By using equation of motion \eqref{eom2}, we can get the following estimate
\begin{eqnarray}
  \int\limits_{\left[ {{t}_{1}},{{t}_{2}} \right]\times \left\{ \frac{1}{2}t\le \left| {{r}_{*}} \right|\le \frac{3}{4}t \right\}\times {{S}^{2}}}{{{\left| {{\partial }_{2}}{{\partial }_{2}}\phi  \right|}^{2}}d\varsigma }& \le& {{\left\| \phi \right\|}_{L_{loc}^{\infty }}}\int\limits_{\left[ {{t}_{1}},{{t}_{2}} \right]\times \left\{ \frac{1}{2}t\le \left| {{r}_{*}} \right|\le \frac{3}{4}t \right\}\times {{S}^{2}}}{{{\left| \partial A \right|}^{2}}d\varsigma }\notag\\
 &&+{{\left\| A \right\|}_{L_{loc}^{\infty }}}\int\limits_{\left[ {{t}_{1}},{{t}_{2}} \right]\times \left\{ \frac{1}{2}t\le \left| {{r}_{*}} \right|\le \frac{3}{4}t \right\}\times {{S}^{2}}}{{{\left| \partial \phi  \right|}^{2}}d\varsigma } \notag\\ 
 &&+{{\left\| A \right\|}_{L_{loc}^{\infty }}}\int\limits_{\left[ {{t}_{1}},{{t}_{2}} \right]\times \left\{ \frac{1}{2}t\le \left| {{r}_{*}} \right|\le \frac{3}{4}t \right\}\times {{S}^{2}}}{{{\left| D\phi  \right|}^{2}}d\varsigma } \notag\\ 
 &\le& {{E}_{l}}\left( {{\left\| \phi \right\|}_{L_{loc}^{\infty }}}+{{\left\| A \right\|}_{L_{loc}^{\infty }}} \right)~.
\end{eqnarray}
Consequently, we derive
\begin{eqnarray}
    \int\limits_{\left[ {{t}_{1}},{{t}_{2}} \right]\times \left\{ \frac{1}{2}t\le \left| {{r}_{*}} \right|\le \frac{3}{4}t \right\}\times {{S}^{2}}}{{{\left| f\bar{\phi }{{\partial }_{2}}{{\partial }_{2}}\phi  \right|}^{2}}d\varsigma }\le {{E}_{l}}{{\left\| f\phi \right\|}_{L_{loc}^{\infty }}}\left( {{\left\| \phi \right\|}_{L_{loc}^{\infty }}}+{{\left\| A \right\|}_{L_{loc}^{\infty }}} \right)~.
\end{eqnarray}
In the same way, we can get the estimate for the remaining terms of \eqref{y1_1} as follows
\begin{eqnarray}
      \int\limits_{\left[ {{t}_{1}},{{t}_{2}} \right]\times \left\{ \frac{1}{2}t\le \left| {{r}_{*}} \right|\le \frac{3}{4}t \right\}\times {{S}^{2}}}{{{\left| f{{\partial }_{2}}{{A}_{2}}{{\phi }^{2}} \right|}^{2}}d\varsigma }&\le& {{E}_{l}}{{\left\| f\phi  \right\|}_{{{L}^{\infty }_{loc}}}}{{\left\| \phi  \right\|}_{{{L}^{\infty }_{loc}}}}~, \\ 
 \int\limits_{\left[ {{t}_{1}},{{t}_{2}} \right]\times \left\{ \frac{1}{2}t\le \left| {{r}_{*}} \right|\le \frac{3}{4}t \right\}\times {{S}^{2}}}{{{\left| f\bar{\phi }{{A}_{2}}{{\partial }_{2}}\phi  \right|}^{2}}d\varsigma }&\le& {{E}_{l}}{{\left\| f\bar{\phi } \right\|}_{{{L}^{\infty }_{loc}}}}{{\left\| A \right\|}_{{{L}^{\infty }_{loc}}}} ~.
\end{eqnarray}
Therefore, we have
\begin{eqnarray}
    \int\limits_{\left[ {{t}_{1}},{{t}_{2}} \right]\times \left\{ \frac{1}{2}t\le \left| {{r}_{*}} \right|\le \frac{3}{4}t \right\}\times {{S}^{2}}}{{{\left| f\bar{\phi }{{\partial }_{2}}{{D}_{2}}\phi  \right|}^{2}}d\varsigma }&\le& {{E}_{l}}\left( {{\left\| \phi  \right\|}_{{{L}^{\infty }_{loc}}}}+{{\left\| A \right\|}_{{{L}^{\infty }_{loc}}}}+{{\left\| f\phi  \right\|}_{{{L}^{\infty }_{loc}}}}{{\left\| \phi  \right\|}_{{{L}^{\infty }_{loc}}}}\right.\notag\\
    &&+\left.{{\left\| f\bar{\phi } \right\|}_{{{L}^{\infty }_{loc}}}}{{\left\| A \right\|}_{{{L}^{\infty }_{loc}}}} \right).\notag\\
\end{eqnarray}
Similarly, by employing some computation, we can get the estimates for the second and fourth term of \eqref{Y1}
\begin{eqnarray}
       \int\limits_{\left[ {{t}_{1}},{{t}_{2}} \right]\times \left\{ \frac{1}{2}t\le \left| {{r}_{*}} \right|\le \frac{3}{4}t \right\}\times {{S}^{2}}}{{{\left| f{{D}_{2}}\phi {{\partial }_{2}}\bar{\phi } \right|}^{2}}d\varsigma }&\le& {{\left\| f\partial \bar{\phi } \right\|}_{{{L}^{\infty }_{loc}}}}\int\limits_{\left[ {{t}_{1}},{{t}_{2}} \right]\times \left\{ \frac{1}{2}t\le \left| {{r}_{*}} \right|\le \frac{3}{4}t \right\}\times {{S}^{2}}}{{{\left| {{D}_{2}}\phi  \right|}^{2}}d\varsigma }\notag\\
       &\le& {{E}_{l}}{{\left\| f\partial \bar{\phi } \right\|}_{{{L}^{\infty }_{loc}}}}~, 
\end{eqnarray}
\begin{eqnarray}
  \int\limits_{\left[ {{t}_{1}},{{t}_{2}} \right]\times \left\{ \frac{1}{2}t\le \left| {{r}_{*}} \right|\le \frac{3}{4}t \right\}\times {{S}^{2}}}{{{\left| f{{D}_{2}}\phi {{\partial }_{3}}\bar{\phi } \right|}^{2}}d\varsigma }&\le& {{\left\| f\partial \bar{\phi } \right\|}_{{{L}^{\infty }_{loc}}}}\int\limits_{\left[ {{t}_{1}},{{t}_{2}} \right]\times \left\{ \frac{1}{2}t\le \left| {{r}_{*}} \right|\le \frac{3}{4}t \right\}\times {{S}^{2}}}{{{\left| {{D}_{2}}\phi  \right|}^{2}}d\varsigma }\notag\\
  &\le& {{E}_{l}}{{\left\| f\partial \bar{\phi } \right\|}_{{{L}^{\infty }_{loc}}}}~.
\end{eqnarray}
Notice that the estimate of the third term gives the same result as the estimate of the ﬁrst term. Thus, at this point, we get the estimate for \eqref{Y1} as follows
\begin{eqnarray}
    {\mathcal{Y}_{1}}\le c{{E}_{l}^{1/2}}\left( {{\left\| \phi  \right\|}_{{{L}^{\infty }_{loc}}}}+{{\left\| A \right\|}_{{{L}^{\infty }_{loc}}}}+{{\left\| f\phi  \right\|}_{{{L}^{\infty }_{loc}}}}{{\left\| \phi  \right\|}_{{{L}^{\infty }_{loc}}}}+{{\left\| f\bar{\phi } \right\|}_{{{L}^{\infty }_{loc}}}}{{\left\| A \right\|}_{{{L}^{\infty }_{loc}}}}+{{\left\| f\partial \bar{\phi } \right\|}_{{{L}^{\infty }_{loc}}}} \right)^{1/2}.
\end{eqnarray}
In the same way, we can get estimate for the second term $\mathcal{Y}_2$ which have the same results as $\mathcal{Y}_1$. For the third term of \eqref{estirhoinY}, by Minkowski inequality, we have
\begin{eqnarray}\label{3119}
      \int\limits_{\left[ {{t}_{1}},{{t}_{2}} \right]\times \left\{ \frac{1}{2}t\le \left| {{r}_{*}} \right|\le \frac{3}{4}t \right\}\times {{S}^{2}}}{{{\left| L{{r}^{2}}{{D}_{0}}\phi \bar{\phi } \right|}^{2}}d\varsigma }&\le& \int\limits_{\left[ {{t}_{1}},{{t}_{2}} \right]\times \left\{ \frac{1}{2}t\le \left| {{r}_{*}} \right|\le \frac{3}{4}t \right\}\times {{S}^{2}}}{{{\left| {{\partial }_{t}}\left( {{r}^{2}}{{D}_{0}}\phi \bar{\phi } \right) \right|}^{2}}d\varsigma } \notag\\ 
 && +\int\limits_{\left[ {{t}_{1}},{{t}_{2}} \right]\times \left\{ \frac{1}{2}t\le \left| {{r}_{*}} \right|\le \frac{3}{4}t \right\}\times {{S}^{2}}}{{{\left| {{\partial }_{{{r}_{*}}}}\left( {{r}^{2}}{{D}_{0}}\phi \bar{\phi } \right) \right|}^{2}}d\varsigma }~.\notag\\
\end{eqnarray}
We can estimate \eqref{3119} in the following way
\begin{eqnarray}
      \int\limits_{\left[ {{t}_{1}},{{t}_{2}} \right]\times \left\{ \frac{1}{2}t\le \left| {{r}_{*}} \right|\le \frac{3}{4}t \right\}\times {{S}^{2}}}{{{\left| {{\partial }_{t}}\left( {{r}^{2}}{{D}_{0}}\phi \bar{\phi } \right) \right|}^{2}}d\varsigma }&\le& \int\limits_{\left[ {{t}_{1}},{{t}_{2}} \right]\times \left\{ \frac{1}{2}t\le \left| {{r}_{*}} \right|\le \frac{3}{4}t \right\}\times {{S}^{2}}}{{{\left| {{r}^{2}}\left( {{\partial }_{0}}{{D}_{0}}\phi  \right)\bar{\phi } \right|}^{2}}d\varsigma } \notag\\ 
 &&+\int\limits_{\left[ {{t}_{1}},{{t}_{2}} \right]\times \left\{ \frac{1}{2}t\le \left| {{r}_{*}} \right|\le \frac{3}{4}t \right\}\times {{S}^{2}}}{{{\left| {{r}^{2}}{{D}_{0}}\phi {{\partial }_{0}}\bar{\phi } \right|}^{2}}d\varsigma } \notag\\ 
 & \le& {{\left\| {{r}^{2}}\bar{\phi } \right\|}_{{{L}^{\infty }_{loc}}}}\int\limits_{\left[ {{t}_{1}},{{t}_{2}} \right]\times \left\{ \frac{1}{2}t\le \left| {{r}_{*}} \right|\le \frac{3}{4}t \right\}\times {{S}^{2}}}{{{\left| \left( {{\partial }_{0}}{{D}_{0}}\phi  \right) \right|}^{2}}d\varsigma } \notag\\ 
 & \le& {{E}_{l}}\left( {{\left\| \phi  \right\|}_{{{L}^{\infty }_{loc}}}}{{\left\| {{r}^{2}}\bar{\phi } \right\|}_{{{L}^{\infty }_{loc}}}}+{{\left\| A \right\|}_{{{L}^{\infty }_{loc}}}}{{\left\| {{r}^{2}}\bar{\phi } \right\|}_{{{L}^{\infty }_{loc}}}} \right), \notag\\ 
\end{eqnarray}
and
\begin{eqnarray}
     \int\limits_{\left[ {{t}_{1}},{{t}_{2}} \right]\times \left\{ \frac{1}{2}t\le \left| {{r}_{*}} \right|\le \frac{3}{4}t \right\}\times {{S}^{2}}}{{{\left| {{\partial }_{{{r}_{*}}}}\left( {{r}^{2}}{{D}_{0}}\phi \bar{\phi } \right) \right|}^{2}}d\varsigma }&=&\int\limits_{\left[ {{t}_{1}},{{t}_{2}} \right]\times \left\{ \frac{1}{2}t\le \left| {{r}_{*}} \right|\le \frac{3}{4}t \right\}\times {{S}^{2}}}{{{\left| \left( {{r}^{2}}{{D}_{0}}\phi \bar{\phi } \right) \right|}^{2}}d{{\omega }^{2}}} \notag\\ 
 & \le& {{\left\| {{r}^{2}}\phi  \right\|}_{L_{loc}^{\infty }}}\int\limits_{\left[ {{t}_{1}},{{t}_{2}} \right]\times \left\{ \frac{1}{2}t\le \left| {{r}_{*}} \right|\le \frac{3}{4}t \right\}\times {{S}^{2}}}{{{\left| {{D}_{0}}\phi  \right|}^{2}}d{{\omega }^{2}}} \notag\\ 
 & \le& {{E}_{l}}{{\left\| {{r}^{2}}\phi  \right\|}_{L_{loc}^{\infty }}} . 
\end{eqnarray}
Consequently, the estimate for the third term in equation \eqref{estirhoinY} is derived as follows:
\begin{eqnarray}\label{y3}
    \mathcal{Y}_3\le {{E}_{l}^{1/2}}\left( {{\left\| \phi  \right\|}_{{{L}^{\infty }_{loc}}}}{{\left\| {{r}^{2}}\bar{\phi } \right\|}_{{{L}^{\infty }_{loc}}}}+{{\left\| A \right\|}_{{{L}^{\infty }_{loc}}}}{{\left\| {{r}^{2}}\bar{\phi } \right\|}_{{{L}^{\infty }_{loc}}}}+1 \right)^{1/2}.
\end{eqnarray}
It should be noted that the fourth term yields the same result as in equation \eqref{y3}. Thus, the final estimate for equation \eqref{estirhoinY} is given by
\begin{eqnarray}\label{Yfinal}
\mathcal{Y}&\le& c{{E}_{l}^{1/2}}\left( {{\left\| \phi  \right\|}_{L_{loc}^{\infty }}}+{{\left\| A \right\|}_{L_{loc}^{\infty }}}+{{\left\| f\phi  \right\|}_{L_{loc}^{\infty }}}{{\left\| \phi  \right\|}_{L_{loc}^{\infty }}}+{{\left\| f\bar{\phi } \right\|}_{L_{loc}^{\infty }}}{{\left\| A \right\|}_{L_{loc}^{\infty }}} \right.\notag\\
&&+\left. {{\left\| f\partial \bar{\phi } \right\|}_{L_{loc}^{\infty }}}+{{\left\| \phi  \right\|}_{L_{loc}^{\infty }}}{{\left\| {{r}^{2}}\bar{\phi } \right\|}_{L_{loc}^{\infty }}}+{{\left\| A \right\|}_{L_{loc}^{\infty }}}{{\left\| {{r}^{2}}\bar{\phi } \right\|}_{L_{loc}^{\infty }}}+1 \right)^{1/2}~.
\end{eqnarray}
Thus, the estimate for the first term of \eqref{source} become
\begin{eqnarray}\label{hasilfirstsource}
    \int\limits_{\left[ {{t}_{1}},{{t}_{2}} \right]\times \left\{ \frac{1}{2}t\le \left| {{r}_{*}} \right|\le \frac{3}{4}t \right\}\times {{S}^{2}}}{t{u}'\rho d\varsigma }&\le& {{E}_{l}}\left( {{\left\| \phi  \right\|}_{L_{loc}^{\infty }}}+{{\left\| A \right\|}_{L_{loc}^{\infty }}}+{{\left\| f\phi  \right\|}_{L_{loc}^{\infty }}}{{\left\| \phi  \right\|}_{L_{loc}^{\infty }}}+{{\left\| f\bar{\phi } \right\|}_{L_{loc}^{\infty }}}{{\left\| A \right\|}_{L_{loc}^{\infty }}} \right.\notag\\
    &&+\left. {{\left\| f\partial \bar{\phi } \right\|}_{L_{loc}^{\infty }}}+{{\left\| \phi  \right\|}_{L_{loc}^{\infty }}}{{\left\| {{r}^{2}}\bar{\phi } \right\|}_{L_{loc}^{\infty }}}+{{\left\| A \right\|}_{L_{loc}^{\infty }}}{{\left\| {{r}^{2}}\bar{\phi } \right\|}_{L_{loc}^{\infty }}}+1 \right)^{1/2}.\notag\\
\end{eqnarray}
For the second term of \eqref{source}, by using the Cauchy-Schwarz inequality along with Hardy-like estimate, we obatin
\begin{eqnarray}
       \int\limits_{\left[ {{t}_{1}},{{t}_{2}} \right]\times \mathbb{R}\times {{S}^{2}}}{\frac{u\rho }{1+{{r}_{*}}^{2}}d\varsigma }&\le& {{\left( \int\limits_{\left[ {{t}_{1}},{{t}_{2}} \right]\times \mathbb{R}\times {{S}^{2}}}{\frac{{{\left| u \right|}^{2}}}{{{\left( 1+{{r}_{*}}^{2} \right)}^{2}}}d\varsigma } \right)}^{1/2}}{{\left( \int\limits_{\left[ {{t}_{1}},{{t}_{2}} \right]\times \mathbb{R}\times {{S}^{2}}}{\rho d\varsigma } \right)}^{1/2}} \notag\\ 
 & \le& {{E}_{l}^{1/2}}{{\left( \int\limits_{\left[ {{t}_{1}},{{t}_{2}} \right]\times \mathbb{R}\times {{S}^{2}}}{\rho d\varsigma } \right)}^{1/2}}~.
\end{eqnarray}
Thus, considering \eqref{Yfinal}, we get the same result as in \eqref{hasilfirstsource},
\begin{eqnarray}
    \int\limits_{\left[ {{t}_{1}},{{t}_{2}} \right]\times \mathbb{R}\times {{S}^{2}}}{\frac{u\rho }{1+{{r}_{*}}^{2}}d\varsigma }&\le& {{E}_{l}}\left( {{\left\| \phi  \right\|}_{L_{loc}^{\infty }}}+{{\left\| A \right\|}_{L_{loc}^{\infty }}}+{{\left\| f\phi  \right\|}_{L_{loc}^{\infty }}}{{\left\| \phi  \right\|}_{L_{loc}^{\infty }}}+{{\left\| f\bar{\phi } \right\|}_{L_{loc}^{\infty }}}{{\left\| A \right\|}_{L_{loc}^{\infty }}} \right.\notag\\
    &&+\left. {{\left\| f\partial \bar{\phi } \right\|}_{L_{loc}^{\infty }}}+{{\left\| \phi  \right\|}_{L_{loc}^{\infty }}}{{\left\| {{r}^{2}}\bar{\phi } \right\|}_{L_{loc}^{\infty }}}+{{\left\| A \right\|}_{L_{loc}^{\infty }}}{{\left\| {{r}^{2}}\bar{\phi } \right\|}_{L_{loc}^{\infty }}}+1 \right)^{1/2}.\notag\\
\end{eqnarray}
For the third term of \eqref{source}, we obtain
\begin{eqnarray}
       \int\limits_{\left[ {{t}_{1}},{{t}_{2}} \right]\times \mathbb{R}\times {{S}^{2}}}{u\rho d\varsigma }&\le& {{\left( \int\limits_{\left[ {{t}_{1}},{{t}_{2}} \right]\times \mathbb{R}\times {{S}^{2}}}{{{\left| u \right|}^{2}}d\varsigma } \right)}^{1/2}}{{\left( \int\limits_{\left[ {{t}_{1}},{{t}_{2}} \right]\times \mathbb{R}\times {{S}^{2}}}{{{\left| \rho  \right|}^{2}}d\varsigma } \right)}^{1/2}} \notag\\ 
 & \le& E_{l}^{1/2}{{\left( \int\limits_{\left[ {{t}_{1}},{{t}_{2}} \right]\times \mathbb{R}\times {{S}^{2}}}{{{\left| \rho  \right|}^{2}}d\varsigma } \right)}^{1/2}}\notag\\
 &\le& {{E}_{l}}\left( {{\left\| \phi  \right\|}_{L_{loc}^{\infty }}}+{{\left\| A \right\|}_{L_{loc}^{\infty }}}+{{\left\| f\phi  \right\|}_{L_{loc}^{\infty }}}{{\left\| \phi  \right\|}_{L_{loc}^{\infty }}}+{{\left\| f\bar{\phi } \right\|}_{L_{loc}^{\infty }}}{{\left\| A \right\|}_{L_{loc}^{\infty }}} \right.\notag\\
    &&+\left. {{\left\| f\partial \bar{\phi } \right\|}_{L_{loc}^{\infty }}}+{{\left\| \phi  \right\|}_{L_{loc}^{\infty }}}{{\left\| {{r}^{2}}\bar{\phi } \right\|}_{L_{loc}^{\infty }}}+{{\left\| A \right\|}_{L_{loc}^{\infty }}}{{\left\| {{r}^{2}}\bar{\phi } \right\|}_{L_{loc}^{\infty }}}+1 \right)^{1/2}.\notag\\
\end{eqnarray}
Thus, using \eqref{AH4} and \eqref{phiH4}, \eqref{source} become
\begin{eqnarray}
     \int\limits_{\left[ {{t}_{1}},{{t}_{2}} \right]\times \mathbb{R}\times {{S}^{2}}}{\left\{ \frac{t}{{{r}_{*}}}\mu h{u}'+\frac{1}{2}\frac{t}{{{r}_{*}}}\left( {\mu }'h+\mu {h}' \right)u-\frac{1}{2}\frac{t\mu h}{r_{*}^{2}}u \right\}\rho }d\varsigma\le\bar{C}\left(1+t\right) {{E}_{l}}~.
\end{eqnarray}
Therefore, eq \eqref{egammaformlain} becomes
\begin{eqnarray}
        \int\limits_{\left[ {{t}_{1}},{{t}_{2}} \right]\times \mathbb{R}\times {{S}^{2}}}{\left(  2\frac{t}{{{r}_{*}}}\mu {h}'{{{{u}'}}^{2}}+\frac{t\mu h}{{{r}_{*}}}{V}'{{\left| \slashed{\nabla }u \right|}^{2}}  \right)}dx&\le& -2{{E}_{\gamma }}\left[ u \right]\left( {{t}_{2}} \right)+2{{E}_{\gamma }}\left[ u \right]\left( {{t}_{1}} \right) \notag\\
 &&+\bar{C}\int_{{{t}_{1}}}^{{{t}_{2}}}\left(1+t\right){{E}_{l}}\left( t \right)dt~.
\end{eqnarray}
The same result is obtained also for
\begin{eqnarray}\label{eq1}
        \int\limits_{\left[ {{t}_{1}},{{t}_{2}} \right]\times \mathbb{R}\times {{S}^{2}}}{\left(  2\frac{t}{{{r}_{*}}}\mu {h}'{{{\slashed{\nabla}{u}'}}^{2}}+\frac{t\mu h}{{{r}_{*}}}{V}'{{\left| \slashed{\Delta }u \right|}^{2}}  \right)}dx&\le& -2{{E}_{\gamma }}\left[ \slashed{\nabla}u \right]\left( {{t}_{2}} \right)+2{{E}_{\gamma }}\left[ \slashed{\nabla}u \right]\left( {{t}_{1}} \right) \notag\\
 &&+\bar{C}\int_{{{t}_{1}}}^{{{t}_{2}}}\left(1+t\right){{E}_{l}\left[ \slashed{\nabla}u \right]}\left( t \right)dt~.\notag\\
\end{eqnarray}
From Hardy-like inequality we have
\begin{eqnarray}
    \int\limits_{\left\{ t \right\}\times \left\{ \left| {{r}_{*}} \right|\le \frac{1}{2}t \right\}\times {{S}^{2}}}{t\frac{\left| \slashed{\nabla }u \right|}{{{\left( 1+{{\left( \varepsilon {{r}_{*}} \right)}^{2}} \right)}^{\sigma +1}}}}d\varsigma \le \int\limits_{\left[ {{t}_{1}},{{t}_{2}} \right]\times \mathbb{R}\times {{S}^{2}}}{\left(  2\frac{t}{{{r}_{*}}}\mu {h}'{{{\slashed{\nabla}{u}'}}^{2}}+\frac{t\mu h}{{{r}_{*}}}{V}'{{\left| \slashed{\Delta }u \right|}^{2}}  \right)}d\varsigma~.\notag\\
\end{eqnarray}
Let $a\ge 1$ such that $\sup \left( {{\chi }_{trap}} \right)\subseteq \left[ -a/2,a/2 \right]$, then for $C>0$ and ${{r}_{*}}\in \left[ -a/2,a/2 \right]$
\begin{eqnarray}
    {{\chi }_{trap}}\le \frac{C}{{{\left( 1+{{\left( \varepsilon {{r}_{*}} \right)}^{2}} \right)}^{\sigma +1}}}~,
\end{eqnarray}
then
\begin{eqnarray}
    \int\limits_{\left\{ t \right\}\times \left\{ \left| {{r}_{*}} \right|\le \frac{1}{2}t \right\}\times {{S}^{2}}}{t{{\chi }_{trap}}{{\left| \slashed{\nabla }u \right|}^{2}}}d\varsigma \le C\int\limits_{\left\{ t \right\}\times \left\{ \left| {{r}_{*}} \right|\le \frac{1}{2}t \right\}\times {{S}^{2}}}{t\frac{\left|  \slashed{\nabla }u \right|}{{{\left( 1+{{\left( \varepsilon {{r}_{*}} \right)}^{2}} \right)}^{\sigma +1}}}}d\varsigma~.
\end{eqnarray}
Integrating from $a$ to $t'\ge a$, then using \eqref{eq1}, we get
\begin{eqnarray}
       \int\limits_{\left[ a,t' \right]\times \left\{ \left| {{r}_{*}} \right|\le \frac{1}{2}t \right\}\times {{S}^{2}}}{t{{\chi }_{trap}}{{\left| \slashed{\nabla }u \right|}^{2}}}dx&\le&\bar{C}\left| -2\left. {{E}_{\gamma }}\left[ \slashed{\nabla }u \right]\left( t \right) \right|_{a}^{t'} \right| \notag\\ 
 & +&\bar{C}\int_{a}^{t'}\left(1+t\right){{E}_{l}}\left[ \slashed{\nabla }u \right]\left( t \right)dt.
\end{eqnarray}
By \eqref{lemma3}, we obtain
\begin{eqnarray}\label{3135}
      {{E}_{\mathcal{C}}}(t')\le {{E}_{\mathcal{C}}}(a)+\bar{C}\left| -2\left. {{E}_{\gamma }}\left[ \slashed{\nabla }u \right]\left( t \right) \right|_{a}^{t'} \right|+\bar{C}\int_{a}^{t'}(1+t){{E}_{l}}\left[ \slashed{\nabla }u \right]\left( t \right)dt .
\end{eqnarray}


Now, let consider the following lemma
\begin{lemma}
    Let $E_\gamma$ as in \eqref{defEgamma}, then we can attain
    \begin{eqnarray}\label{egammaesti}
        E_\gamma\le \hat{C} \left(1+t\right) E_l~.
    \end{eqnarray}
\end{lemma}
\begin{proof}
    Using \eqref{defg}, we can write down \eqref{defEgamma} in the following way
    \begin{eqnarray}\label{egammaestikeEL}
          {{E}_{\gamma }}[u,\phi ]\left( t \right)=\int\limits_{{{\Sigma }_{t}}}{\left( \frac{t}{{{r}_{*}}}\mu h{u}'+\frac{1}{2}\frac{t}{{{r}_{*}}}\left( {\mu }'h+\mu {h}' \right)u-\frac{1}{2}\frac{t\mu h}{r_{*}^{2}}u \right){{\partial }_{t}}ud\varsigma } ~.
    \end{eqnarray}
Then we can estimate each term in \eqref{egammaestikeEL} as follows,
\begin{eqnarray}
       \int\limits_{{{\Sigma }_{t}}}{\frac{t}{{{r}_{*}}}\mu h{u}'{{\partial }_{t}}ud\varsigma }&\le& \hat{C}\int\limits_{{{\Sigma }_{t}}}{{u}'{{\partial }_{t}}ud\varsigma } \notag\\ 
 & \le& \hat{C}{{\left( \int\limits_{{{\Sigma }_{t}}}{{{\left| {{\partial }_{t}}u \right|}^{2}}d\varsigma } \right)}^{1/2}}{{\left( \int\limits_{{{\Sigma }_{t}}}{{{\left| {{u}'} \right|}^{2}}d\varsigma } \right)}^{1/2}} \notag\\ 
 & \le& \hat{C}{{E}_{l}} ~,
\end{eqnarray}
\begin{eqnarray}
       \frac{1}{2}\left( \int\limits_{{{\Sigma }_{t}}}{\frac{t}{{{r}_{*}}}\left( {\mu }'h+\mu {h}' \right)u{{\partial }_{t}}ud\varsigma } \right)&\le& \hat{C}\left( \int\limits_{{{\Sigma }_{t}}}{\frac{u}{{{r}_{*}}}{{\partial }_{t}}ud\varsigma }+\int\limits_{{{\Sigma }_{t}}}{\frac{u}{1+{{r}_{*}}^{2}}{{\partial }_{t}}ud\varsigma } \right) \notag\\ 
 & \le& \hat{C}\left( {{\left( \int\limits_{{{\Sigma }_{t}}}{\frac{{{\left| u \right|}^{2}}}{{{r}_{*}}^{2}}d\varsigma } \right)}^{1/2}}{{\left( \int\limits_{{{\Sigma }_{t}}}{{{\left| {{\partial }_{t}}u \right|}^{2}}d\varsigma } \right)}^{1/2}} \right. \notag\\ 
 && +\left. {{\left( \int\limits_{{{\Sigma }_{t}}}{\frac{{{\left| u \right|}^{2}}}{{{\left( 1+{{r}_{*}}^{2} \right)}^{2}}}d\varsigma } \right)}^{1/2}}{{\left( \int\limits_{{{\Sigma }_{t}}}{{{\left| {{\partial }_{t}}u \right|}^{2}}d\varsigma } \right)}^{1/2}} \right) \notag\\ 
 & \le& \hat{C}{{E}_{l}} ~,
\end{eqnarray}
and
\begin{eqnarray}
     -\frac{1}{2}\int\limits_{{{\Sigma }_{t}}}{\left( \frac{t\mu h}{r_{*}^{2}}u \right){{\partial }_{t}}ud\varsigma }&\le& \hat{C}\int\limits_{{{\Sigma }_{t}}}{\left( u \right){{\partial }_{t}}ud\varsigma } \notag\\ 
 & \le &\hat{C}t{{E}_{l}} ~.
\end{eqnarray}
Hence, combining all the term we can attain \eqref{egammaestikeEL}.
\end{proof}

Now using \eqref{egammaesti}, \eqref{3135} becomes
\begin{eqnarray}\label{Ec_acent}
      {{E}_{\mathcal{C}}}(t')&\le& {{E}_{\mathcal{C}}}(a)+\bar{C}\underset{t\in \left[ 0,t' \right]}{\mathop{\sup }}\,\left( \left( 1+t \right){{E}_{l}}\left[ \slashed{\nabla }u \right]\left( t \right) \right)+\bar{C}\int_{a}^{t'}(1+t){{E}_{l}}\left[ \slashed{\nabla }u \right]\left( t \right)dt.\notag\\ 
\end{eqnarray}
where $\bar{C}$ is a constant depending solely on the initial data. To get bound on $E_{\mathcal{C}}(a)$, we need to consider for $0\le t_1\le 1$ cases. Since $V>0$ and $\chi_{trap}$ has a compact support then $\chi_{trap}\le CV$, thus
\begin{eqnarray}
   \int_{[0,{{t}_{1}}]\times \mathbb{R}\times {{\mathbb{S}}^{2}}}t{{{\chi}_{\text{trap}}}}|\nabla u{{|}^{2}}dx&\le& C\int_{[0,{{t}_{1}}]\times \mathbb{R}\times {{\mathbb{S}}^{2}}}{V}|\slashed{\nabla} u{{|}^{2}}dx \notag\\ 
 & \le& C\int_{0}^{{{t}_{1}}}{E}[u](t)dt \notag\\ 
 & \le& C\int_{0}^{{{t}_{1}}}{{{E}_{\mathcal{C}}}}[u](t)dt.
\end{eqnarray}
and for $t_2\ge 1$ we have
\begin{eqnarray}
   \int_{[{{t}_{1}},{{t}_{2}}]\times \mathbb{R}\times {{\mathbb{S}}^{2}}}t{{{\chi}_{\text{trap}}}}|\slashed{\nabla}u{{|}^{2}}dx&\le& C\int_{[{{t}_{1}},{{t}_{2}}]\times \mathbb{R}\times {{\mathbb{S}}^{2}}}t^2{V}|\slashed{\nabla} u{{|}^{2}}dx \notag\\ 
 & \le& C\int_{{{t}_{1}}}^{{{t}_{2}}}{{{E}_{\mathcal{C}}}}[u](t)dt.
\end{eqnarray}
Now we need to estimate the second term of \eqref{lemma3},
\begin{eqnarray}
\int\limits_{[{{t}_{1}},{{t}_{2}}]\times \mathbb{R}\times {{\mathbb{S}}^{2}}}{\mathcal{Q}\left( \phi ,\bar{\phi } \right)dx}&=&\int\limits_{[{{t}_{1}},{{t}_{2}}]\times \mathbb{R}\times {{\mathbb{S}}^{2}}}{\left( {{t}^{2}}+{{r}_{*}}^{2} \right){{\partial }_{t}}u\rho dx}+\int\limits_{[{{t}_{1}},{{t}_{2}}]\times \mathbb{R}\times {{\mathbb{S}}^{2}}}{2t{{r}_{*}}{{\partial }_{{{r}_{*}}}}u\rho dx} \notag\\ 
& \le& C\int\limits_{{{t}_{1}}}^{{{t}_{2}}}{{{t}^{2}}{{\left( \int\limits_{[{{t}_{1}},{{t}_{2}}]\times \mathbb{R}\times {{\mathbb{S}}^{2}}}{{{\left| {{\partial }_{t}}u \right|}^{2}}d\varsigma } \right)}^{1/2}}{{\left( \int\limits_{[{{t}_{1}},{{t}_{2}}]\times \mathbb{R}\times {{\mathbb{S}}^{2}}}{{{\left| \rho  \right|}^{2}}d\varsigma } \right)}^{1/2}}}dt \notag\\ 
& &+C\int\limits_{{{t}_{1}}}^{{{t}_{2}}}{{{t}^{2}}{{\left( \int\limits_{[{{t}_{1}},{{t}_{2}}]\times \mathbb{R}\times {{\mathbb{S}}^{2}}}{{{\left| {{\partial }_{{{r}_{*}}}}u \right|}^{2}}d\varsigma } \right)}^{1/2}}{{\left( \int\limits_{[{{t}_{1}},{{t}_{2}}]\times \mathbb{R}\times {{\mathbb{S}}^{2}}}{{{\left| \rho  \right|}^{2}}d\varsigma } \right)}^{1/2}}}dt \notag\\ 
& \le&\bar{C}\int\limits_{{{t}_{1}}}^{{{t}_{2}}}{{{t}^{2}}}(1+t) E\left[ u \right]\left( t \right)dt 
\end{eqnarray}
Therefore for all $t_2,t_1\ge 0$, by \eqref{lemma3}, we obtain
\begin{eqnarray}\label{EQ}
   E_{\mathcal{C}}(t_2)- E_{\mathcal{C}}(t_1)&\le&\int_{[{{t}_{1}},{{t}_{2}}]\times \mathbb{R}\times {{\mathbb{S}}^{2}}}t{{{\chi}_{\text{trap}}}}|\slashed{\nabla}u{{|}^{2}}dx+\int\limits_{{\left[ {{t}_{1}},{{t}_{2}} \right]\times \mathbb{R}\times {{S}^{2}}}}{ \mathcal{Q} \left( \phi ,\bar{\phi } \right)dx}\notag\\
   &\le& \bar{C}\int_{{{t}_{1}}}^{{{t}_{2}}}\left(t^3+t^2\right){{{E}_{\mathcal{C}}}}[u](t)dt.
\end{eqnarray}
By employing the Gr\"onwall lemma, we have
\begin{eqnarray}\label{expEC}
     {{E}_{\mathcal{C}}}({{t}_{2}})\le {{E}_{\mathcal{C}}}({{t}_{1}})e^{\left( \bar{C} \left[ (({{t}_{2}}-{{t}_{1}})^4+({{t}_{2}}-{{t}_{1}})^3 \right] \right)} .
\end{eqnarray}
Now taking $t_1=0$ and $t_2=a$ we get bound on the conformal energy
\begin{eqnarray}
    E_{\mathcal{C}}(a)\le C E_{\mathcal{C}}(0)
\end{eqnarray}
 Hence, for $t'\ge a$
\begin{eqnarray}\label{Ec_acent2}
      {{E}_{\mathcal{C}}}(t')\le \bar{C}\left({{E}_{\mathcal{C}}}(0)+\underset{t\in \left[ 0,t' \right]}{\mathop{\sup }}\,\left( \left( 1+t \right){{E}_{l}}\left[ \slashed{\nabla }u \right]\left( t \right) \right)+\int_{0}^{t'}(1+t){{E}_{l}}\left[ \slashed{\nabla }u \right]\left( t \right)dt\right).
\end{eqnarray}
Since ${{E}_{l}}\left[\slashed{\nabla }u \right]\left( t \right)\le CE\left[ \slashed{\nabla }u \right]\left( t \right)$ and by lemma \ref{lema2}, $E\left[ \slashed{\nabla }u \right]\left( t \right)=E\left[ \slashed{\nabla }u \right]\left( 0 \right)$. Then, for $t\ge a$, we obtain
\begin{eqnarray}\label{EC1}
    {{E}_{\mathcal{C}}}(t)\le \bar{C}\left( 1+t+t^2\right)\left( {{E}_{\mathcal{C}}}(0)+{E}\left[ \slashed{\nabla }u \right]\left( 0 \right) \right)~.
\end{eqnarray}
Since by Holder inequality
\begin{eqnarray}\label{holderEl}
    {{E}_{l}}\left[ \slashed{\nabla }u \right]\left( t \right)\le {{\left\{ {{E}_{l}}\left[ \slashed{\Delta }^2u \right]\left( t \right) \right\}}^{1/4}}{{\left\{ {{E}_{l}}\left[ u \right]\left( t \right) \right\}}^{3/4}}~,
\end{eqnarray}
and for $|r_*|<\frac{3t}{4}$,
\begin{eqnarray}
       \frac{{{e}_{C}}}{{{t}^{2}}}&=&\frac{1}{4}\left[ \frac{{{\left( t+{{r}_{*}} \right)}^{2}}}{{{t}^{2}}}{{\left( \dot{u}+{u}' \right)}^{2}} \right]+\frac{1}{4}\left[ \frac{{{\left( t-{{r}_{*}} \right)}^{2}}}{{{t}^{2}}}{{\left( \dot{u}-{u}' \right)}^{2}} \right]+\frac{1}{2}\left( 1+\frac{{{r}_{*}}^{2}}{{{t}^{2}}} \right)V{{\left| \slashed{\nabla }u \right|}^{2}}+\frac{e}{{{t}^{2}}} \notag\\ 
 & \ge& \frac{1}{64}\left[ {{\left( \dot{u}+{u}' \right)}^{2}} \right]+\frac{1}{64}\left[ {{\left( \dot{u}-{u}' \right)}^{2}} \right]+\frac{1}{2}\left( 1+\frac{{{r}_{*}}^{2}}{{{t}^{2}}} \right)V{{\left| \slashed{\nabla }u \right|}^{2}}+\frac{e}{{{t}^{2}}} \notag\\ 
 & =&\frac{1}{32}\left( {{{\dot{u}}}^{2}}+{{{{u}'}}^{2}} \right)+\frac{1}{32}V{{\left| \slashed{\nabla }u \right|}^{2}}+\left( \frac{15}{32}+\frac{{{r}_{*}}^{2}}{2{{t}^{2}}} \right)V{{\left| \slashed{\nabla }u \right|}^{2}}+\frac{e}{{{t}^{2}}} \notag\\ 
 & =&\frac{e}{32}-\frac{1}{32}\left( {{\left| {{D}_{0}}\phi  \right|}^{2}}+{{\left| {{D}_{i}}\phi  \right|}^{2}} \right)+\left( \frac{15}{32}+\frac{{{r}_{*}}^{2}}{2{{t}^{2}}} \right)V{{\left| \slashed{\nabla }u \right|}^{2}}+\frac{e}{{{t}^{2}}} ~.
\end{eqnarray}
We have
\begin{eqnarray}
    \frac{{{e}_{C}}}{{{t}^{2}}}+\frac{1}{32}\left( {{\left| {{D}_{0}}\phi  \right|}^{2}}+{{\left| {{D}_{i}}\phi  \right|}^{2}} \right)\ge \frac{e}{32}+\left( \frac{15}{32}+\frac{{{r}_{*}}^{2}}{2{{t}^{2}}} \right)V{{\left| \slashed{\nabla }u \right|}^{2}}+\frac{e}{{{t}^{2}}}~.
\end{eqnarray}
Therefore
\begin{eqnarray}\label{EL1}
    E_l[u]\le C \left(\frac{E_{\mathcal{C}}}{t^2}+E(0)\right)~.
\end{eqnarray}
Subtituting \eqref{EC1} to \eqref{EL1}, we obtain
\begin{eqnarray}\label{EL2}
    E_l[u]\le \bar{C}\frac{ \left( 1+t+t^2 \right)}{t^2}\left( {{E}_{\mathcal{C}}}(0)+{E}\left[ \slashed{\nabla }u \right]\left( 0 \right) +E(0)\right)~.
\end{eqnarray}
Thus, using \eqref{EL2} and \eqref{holderEl} to \eqref{Ec_acent2}, we have
\begin{eqnarray}
      {{E}_{\mathcal{C}}}(t') &\le& \bar{C}\left({{ E_{\mathcal{C}}(0)+ \left({E}\left[ \slashed{\Delta }^2u \right]\left( 0 \right) \right)}^{1/4}}{{\left( {{E}_{\mathcal{C}}}(0)+E\left[ \slashed{\nabla }u \right]\left( 0 \right)+E[u](0) \right)}^{3/4}}\right. \notag\\ 
 & \cdot& \left.\left\{ \underset{t\in \left[ 0,{t}' \right]}{\mathop{\sup }}\,\left( t{{\left( \frac{\left( 1+t+t^2 \right)}{{{t}^{2}}} \right)}^{3/4}} \right)+\int_{0}^{{{t}'}}{{{(1+t)\left( \frac{\left( 1+t+t^2\right)}{{{t}^{2}}} \right)}^{3/4}}}dt \right\}\right)~. \notag\\
\end{eqnarray}
Given that for $t\ge a\ge 1$, we have the inequality
\begin{eqnarray}
  {{\left( \frac{\left( 1+t+t^2\right)}{{{t}^{2}}} \right)}^{3/4}} \le C~,
\end{eqnarray}
and recognizing that $E[u](t) \leq E[\nabla u](t) \leq E[\Delta u](t)\leq E[\Delta^2 u](t)$, we can thus derive an upper bound for the conformal energy over any finite time interval $t$
\begin{eqnarray}\label{EC_t1/4}
{{E}_{\mathcal{C}}}(t) \le \bar{C}(1+t)^2\left( {{E}_{\mathcal{C}}}(0)+E\left[ \slashed{\Delta^2 }u \right]\left( 0 \right)+E[u](0) \right)~,t\ge a.
\end{eqnarray}
To get bound on $0\le t\le a$, one can use the exponential bound in \eqref{expEC},
\begin{eqnarray}
  E_{\mathcal{C}}(t)\le E_{\mathcal{C}}(0) e^{C\left(t^4+t^3\right)}  \le \bar{C} E_{\mathcal{C}}(0)
\end{eqnarray}
Thus for $t\ge 0$, we obtain \eqref{prep1}. This proves theorem \ref{statprep1}.
\end{proof}

\vskip0.5cm

\section*{Acknowledgements}
This work is supported by Posdoctoral Research Program BRIN 2024. B. E. G. and F. T. A. also acknowledge ITB Research Grant 2024 for financial support.

\vskip1cm

\end{document}